\newtheorem{theorem}{Theorem}[section]
\newtheorem{lemma}[theorem]{Lemma}
\newtheorem{remark}[theorem]{Remark}
\def\vp{\varphi}
\def\fhi{\varphi}
\def\n{\textbf{\textit{n}}}
\def\R3{\mathbb{R}^3}
\def\F2o{\overline{F_2}}
\def \E{E}
\def \hsu{\overline{h}}
\def \hgiu{\underline{h}}
\def\d{{\rm d}}
\def \l {\langle}
\def \r {\rangle}
\def\u{\textbf{\textit{u}}}
\def\uu{\textbf{\textit{u}}}
\def\ddt{\frac{\d}{\d t}}
\def\vp{\varphi}
\def\ov{\overline}
\def \dt{\partial_t}
\def\div{\mathrm{div}\, }
\def \R{\mathbb{R}}
\DeclareMathOperator{\sign}{sign}
\def \au {\rm}
\def \ti {\it}
\def \jou {\rm}
\def \bk {\it}
\def \no#1#2#3 {{\bf #1} (#3), #2.}
\def \eds#1#2#3 {#1, #2, #3.}
\def \nome#1#2 {{\bf #1}, (#2).}
\begin{document}

\title[Cahn-Hilliard-Darcy system with mass source]
{On the Existence of Strong Solutions to \\ the Cahn-Hilliard-Darcy system with mass source}

\author[Giorgini-Lam-Rocca-Schimperna]{Andrea Giorgini$^\dagger$, Kei Fong Lam$^\ddagger$, Elisabetta Rocca$^\ast$ \& Giulio Schimperna$^\ast$}
\address{$^\dagger$Department of Mathematics, Indiana University\\
Bloomington, IN 47405, USA\\
agiorgin@iu.edu}

\address{$^\ddagger$Department of Mathematics, Hong Kong Baptist University\\ 
Kowloon Tong, Hong Kong\\
akflam@hkbu.edu.hk}

\address{$^\ast$Dipartimento di Matematica ``F. Casorati", Universit\`{a}
degli Studi di Pavia and IMATI - C.N.R. \\
via Ferrata 5, 27100, Pavia, Italy \\
elisabetta.rocca@unipv.it, giusch04@unipv.it}

\date{\today}

\subjclass[2010]{35D35, 35K61, 35Q35, 76D27}

\keywords{}

\begin{abstract}
We study a diffuse interface model describing the evolution of the flow of
a binary fluid in a Hele-Shaw cell. The model consists of a Cahn--Hilliard--Darcy (CHD) type system with transport and mass source. 
A relevant physical application is related to tumor growth dynamics, which in particular justifies the occurrence of a mass inflow. We 
study the initial-boundary value problem for this model and prove global existence and uniqueness of strong solutions in two space dimensions as well as local
existence in three space dimensions.
\end{abstract}

\maketitle


\section{Introduction}

In a bounded domain $\Omega\subset \mathbb{R}^d$, $d=2,3$, with smooth boundary $\partial \Omega$,
we consider the initial-boundary value problem
\begin{equation}
\begin{cases}
\label{CHHS}
\u  +\nabla q = -\vp \nabla \mu,\\
\mathrm{div}\, \u=S, \\
\partial_t \vp +\div ( \u \vp )=   \Delta \mu+ S,\\
\mu= - \Delta \vp +  \Psi'(\vp),
\end{cases}
 \quad \text{ in } \Omega\times (0,T).
\end{equation}
Here, $\vp$ denotes the difference of the fluid concentrations, $\uu$  is the fluid velocity,
$q$ is the pressure, $\mu$ is the chemical potential and 
\begin{equation}
\label{S}
S= -m \vp +h(\vp),
\end{equation}
where $m$ is a positive constant and $h:[-1,1]\rightarrow [-1,1]$. 
The choice of the mass source term $S$ is dictated by applications of the above system to the tumor growth dynamics.

The CHD system \eqref{CHHS} can be viewed as the simplest version of some recently introduced diffuse 
interface models for tumor growth (cf., e.g., \cite{CLLW2009,Cristini,Frieboes,GLNS2017,GLSS2016,Hawkins,Oden,Wise}).
In this class of models, variants of system \eqref{CHHS} are often coupled with other relations
describing the evolution of additional variables (e.g., a nutrient or a drug), generally obeying
reaction-diffusion type equations. 
Analytical results related to well-posedness, singular limits and long-time behavior of 
models in this class have been established in
\cite{CGH,CGRSAA,CGRSVV,FGR,FLR,GLDirichlet,GLDarcy,GLNeumann,RoccaScala}
for tumor growth models based on the coupling of Cahn--Hilliard (for the tumor density) and reaction-diffusion (for the nutrient or other chemical factors) equations, and in \cite{GLDarcy,G2020,GGW2018,JWZ,LTZ,Melchionna} for models of Cahn--Hilliard--Darcy type. 
We also mention the works \cite{BCG,CG2020,EG2019,EG2019-2,EL2021}  dealing with models of Cahn--Hilliard--Brinkman type, which are characterized by an additional viscosity term occurring in the left-hand side of the velocity equation. 

In the CHD system \eqref{CHHS}, the $\u$-equation is obtained from a generalized form of the Darcy law, where the constant in front of the term 
measuring the excess adhesion force at the diffusive tumor/host tissue interfaces has been set equal to one for simplicity.  
The $\vp$-equation is a convective Cahn--Hilliard type equation, which is derived from the mass balance. The mass source term $S$ accounts for cell
proliferation (or the rate of change in tumor volume, see \cite{Frieboes, Wise}). Then, the 
chemical potential $\mu$ represents the variational derivative of the usual Ginzburg-Landau
free energy functional
 \begin{equation}
 \label{FE}
 \E(\vp)= \int_\Omega  \frac12 |\nabla \vp|^2
+ \Psi(\vp) \, \d x
 \end{equation}
 in which the function $\Psi$ represents the physically relevant Flory--Huggins logarithmic potential (cf.~\cite{Dai, FLRS2018}):
\begin{equation}
\label{LOG}
\Psi(s)=\frac{\theta}{2}\big[ (1+s)\log(1+s)+(1-s)\log(1-s)\big]-\frac{\theta_0}{2} s^2,
 \quad s \in [-1,1],
\end{equation}
where $\theta$ and $\theta_0$ are two positive constant satisfying $0<\theta<\theta_0$.
 
The expression \eqref{S} for the mass source $S$  arises from the specific application 
of system \eqref{CHHS} to tumor evolution problems suggested by the 
recent work \cite{FLRS2018}, where a multicomponent variant of our system is considered,
coupled with a further relation describing the evolution of a nutrient. In \cite{FLRS2018} a global-in-time existence result for {\it weak solutions}\/ has been obtained in the 
three dimensional case; however, the questions of additional regularity and uniqueness are left as open issues. 

When $S$ has a linear dependence on $\vp$ and we neglect the velocity, the $\vp$-equation reduces to the so-called Cahn--Hilliard--Oono equation,  that accounts for long-range (nonlocal) interactions 
in the phase separation process (cf.~\cite{OP}): we refer to \cite{CMZ,GGM2017} (see also the references therein)
for some mathematical results. 
In the case when no source terms occur, i.e., $S= 0$, \eqref{CHHS} is referred to as the Cahn--Hilliard--Hele-Shaw (CHHS)
system and is used to describe two-phase flows in the Hele-Shaw geometry (cf.~\cite{LLG2002}). The CHHS system with zero 
mass source term has been studied by several authors, both numerically and analytically: existence of global weak 
solutions in three dimensions has been shown in \cite{DGL,FengWise,GGW2018}; local and global existence and uniqueness of strong 
solutions in two and three dimensions, respectively, have been achieved in \cite{G2020,GGW2018,LTZ,WZ13}; finally, long-time behavior and 
stability of local minimizers have been addressed in \cite{GGW2018,LTZ,WW12}.
Meanwhile, in the case where $S$ is prescribed, global weak existence and local strong well-posedness for system \eqref{CHHS} has been
shown in \cite{JWZ}, and a related optimal control problem has been studied in \cite{SW2018}.

The CHHS system can be also formally viewed as an appropriate limit
of the classical Navier--Stokes--Cahn--Hilliard (NSCH) system 
(cf.~\cite{AMW,DGL,LLG2002,HH}).
This system, together with its many variants, is a fundamental model in the
evolutionary theory of binary fluids and, as such, is extensively studied in the mathematical literature,
with most of the results referring to local-in-time well-posedness in three space dimensions 
and global-in-time well-posedness in two dimensions under various 
assumptions (see e.g.~\cite{A2009,GMT2019}). The mathematical difficulties associated with the
CHHS system are similar to those associated with the NSCH equations: one gains an algebraic relation
by dropping the nonlinear advection term in the $\u$-equation but then, at the same time, one loses the regularizing viscosity term. 

The {\it multi-phase}\/ variants of Cahn--Hilliard--Darcy and related systems are 
comparatively less studied in the literature, especially when they refer to tumor-growth
models (in such a situation, one may distinguish between the proliferating and necrotic tumor cells, which is what gives the multicomponent evolution).
In \cite{Dai} and \cite{FLRS2018} a simplification of the tumor model introduced in \cite{Chen} is studied, and the existence 
of a weak solution is proved in the three dimensional case. In \cite{GLNS2017}, a vectorial Cahn--Hilliard--Darcy model 
is proposed to describe multi-phase tumor evolution by means of a volume-average velocity (so satisfying
a much simpler relation). 
Furthermore, differently from the system studied in \cite{Dai, FLRS2018} which consists of a single Cahn--Hilliard equation ruling the evolution of the total tumor volume fraction coupled with transport-type
equations for the individual tumor species, 
%
in the model of \cite{GLNS2017} each tumor species is governed by a distinct Cahn--Hilliard-type equation, so that the corresponding natural energy identity permits to deduce better a priori estimates.

System  \eqref{CHHS} is complemented with the following boundary and initial conditions:
\begin{equation}
\label{bc}
\begin{cases}
\partial_\n \vp=0, \quad q=0, \quad \partial_\n \mu-(\uu \cdot \n)\vp=0, \quad &\text{ on } \partial \Omega \times (0,T),\\
\vp( \cdot,0)=\vp_0, \quad &\text{ in } \Omega,
\end{cases}
\end{equation}
where $\n$ is the unit outward normal vector to the boundary $\partial \Omega$. This choice is mainly
motivated by the specific application to tumor evolution addressed, in the multi-phase setting, in \cite{FLRS2018}
with boundary conditions analogous to the above ones. Physically speaking, conditions \eqref{bc}
prescribe that the mass inflow or outflow from the reference domain $\Omega$ depends on the forcing term $S$ only,
and it is not otherwise influenced by the boundary behavior of the macroscopic velocity $\uu$. 
It is worth noting that the (easier) case of Dirichlet boundary 
conditions for $\mu$ could also be treated, but we preferred to handle \eqref{bc} 
which seems to be more reasonable from the modeling point of view (cf.~\cite{FLRS2018} for 
additional considerations). On the contrary, the equally meaningful case of no-flux conditions for 
$\mu$ appears mathematically more difficult due to lower coercivity. 

In this paper we restrict our analysis to system \eqref{CHHS}-\eqref{bc}. 
This represents the one species case when the nutrient evolution is not taken into account. Our main goal is to prove the existence of {\it strong solutions} in two and three 
dimensions and uniqueness in two dimensions for system \eqref{CHHS}. This is indeed an open problem for the whole system 
accounting also for the nutrient evolution (cf.~\cite{FLRS2018}), especially in the multi-component case when only existence of global {\it weak solutions} is known. 

From the mathematical viewpoint, a main difficulty in the study of the regularity problem for CHD systems lies in the strong coupling that 
originates from the transport term. In particular, the well-known regularity result for the Cahn--Hilliard equation with convection in 
\cite[Lemma 3]{A2009} cannot be applied since the only available property from the energy equation on the velocity is $\uu \in L^2(\Omega \times (0,T))$. 
In the recent literature, this issue has been overcome in two ways: in \cite{GGW2018} the transport term has been rewritten by applying the Leray 
projection to the Darcy law, whereas in \cite{G2020} the vorticity equation ($\mathrm{curl}\, \uu= \nabla \vp\cdot \nabla \mu^{\perp}$) has been 
used to derive an estimate of the velocity in $H^1(\Omega)$. Both approaches follow the idea of eliminating the pressure. However, such arguments 
fail in the case of the system \eqref{CHHS}-\eqref{bc} because of the non vanishing divergence of $\uu$ and the lack of a boundary condition for
$\uu$ ($\vp$ might vanish on the boundary), which does not permit us to recover a full gradient estimate of $\uu$ from the $\mathrm{curl}\, \uu$ and $\div \uu$. 
Therefore, we use a different strategy which relies on the algebraic structure of the system. We indeed notice that, by inserting the Darcy 
law \eqref{CHHS}$_1$ into the Cahn--Hilliard equation \eqref{CHHS}$_3$, we can formulate some elliptic problems for the pressure $q$ (see \eqref{PP2}) 
and for the function $\frac{\vp}{1+\vp^2}q$ (see \eqref{phiqP}), which are characterized by lower order terms on the right-hand side. These observations are 
fundamental to study the corresponding elliptic problem for the chemical potential $\mu$ (see \eqref{muP}) and to deduce a $H^2$-estimate of $\mu$ only in terms of the $L^2$ norms of $\nabla \mu$ and $\uu$, and Lipschitz norms of $\vp$.
It is finally worth noting that the technique we use to obtain the additional regularity estimates
is independent of the specific form of the forcing term $S$ and may be applied to different, or more general,
models of Cahn--Hilliard--Darcy type with or without logarithmic potential, the former case being as usual the more 
complex one. In other words, our approach seems to provide a new and rather general strategy 
to address a wide class of Cahn--Hilliard--Darcy systems with mass source and is not restricted to the 
specific case of tumor-growth models.

The remainder of the paper is organized as follows: in Section~\ref{S2} we report some mathematical tools which will be used in the 
analytical proofs; then, Section~\ref{pre} lists the assumptions and outlines the key idea of reformulating the elliptic systems for the pressure and the chemical potentials.  Section~\ref{S3} is devoted to the global well-posedness result in two dimensions, while Section~\ref{S4} deals with the local existence result in three dimensions.


\section{Mathematical Setting}
\label{S2}
\setcounter{equation}{0}


\subsection{Function spaces}
Let $X$ be a (real) Banach or Hilbert space, whose norm is denoted
by $\|\cdot\|_X$. The space $X'$ indicates the
dual space of $X$ and $\l \cdot,\cdot\r$ denotes the duality product. 
In a bounded domain $\Omega \subset \mathbb{R}^d$ with smooth boundary $\partial \Omega$,
$W^{k,p}(\Omega)$, $k\in \mathbb{N}$ and
$p\in [1,+\infty]$, are the Sobolev spaces of real measurable functions on $\Omega$. We
denote by $H^k(\Omega)$ the Hilbert space
$W^{k,2}(\Omega)$ and by $\|\cdot\|_{H^k(\Omega)}$ its norm.
In particular, $H=L^2(\Omega)$ with inner product and norm denoted by $(\cdot,\cdot)$ and $\|\cdot\|$, respectively. The space $H^{1}(\Omega)$ is endowed with the norm $\|f\|_{H^1(\Omega)}^2=\|\nabla f \|^2+\|f\|^2$. The norm of the dual space $(H^1(\Omega))'$ is denoted by $\| \cdot \|_{\ast}$.
For every $f\in (H^1(\Omega))'$, we denote by $\overline{f}$ the total mass of $f$ defined by $\overline{f}=\frac{1}{|\Omega|}\l f,1\r$.
We recall the following Poincar\'{e}'s inequality
\begin{equation}
\label{poincare}
\|f-\overline{f}\|\leq C\|\nabla f\|,\quad \forall\,
f\in H^1(\Omega),
\end{equation}
where the constant $C$ depends only on $d$ and $\Omega$.
\smallskip


\subsection{Interpolation and product inequalities.}
\label{interpolations} 

We recall here some well-known interpolation inequalities
in Sobolev spaces which can be found in classical
literature (see e.g.~\cite{Engler,Te01}):
\begin{itemize}
\item[$\diamond$] Ladyzhenskaya's  inequalities
\begin{align}
\label{L}
\|f\|_{L^4(\Omega)}
\leq C  \| f\|^{\frac12} \|f\|_{H^1(\Omega)}^{\frac12}, \quad &&\forall
\, f \in H^1(\Omega), \  d=2,\\
\label{L3}
\|f\|_{L^4(\Omega)}
\leq C  \| f\|^{\frac14} \|f\|_{H^1(\Omega)}^{\frac34}, \quad &&\forall
\, f \in H^1(\Omega), \  d=3.
\end{align}
\item[$\diamond$] Agmon's inequalities
\begin{align}
&\|f\|_{L^\infty(\Omega)}
\leq C\|f\|^\frac12\|f\|_{H^2(\Omega)}^\frac12,
\quad &&\forall \, f \in H^2(\Omega), \  d=2,\label{Ad2}\\
&\|f\|_{L^\infty(\Omega)}
 \leq C\|f\|_{H^1(\Omega)}^\frac12\|f\|_{H^2(\Omega)}^\frac12,
 \quad &&\forall \, f \in H^2(\Omega), \  d=3. \label{Ad3}
\end{align}
\item[$\diamond$] Br\'{e}zis--Gallouet--Wainger inequality
\begin{equation} 
\label{BWd2}
\| f\|_{L^\infty(\Omega)}\leq C 
\| f\|_{H^1(\Omega)}
\bigg( 1+ \log^{\frac12} \Big(1 +\frac{\|f\|_{W^{1,q}(\Omega)}}{\| f\|_{H^1(\Omega)}}\Big) \bigg), \quad \forall \, f \in W^{1,q}(\Omega), \ q>2, \  d=2. 
\end{equation}

\item[$\diamond$] Gagliardo--Nirenberg inequalities
\begin{align}
\label{GN2}
&\| f\|_{L^p(\Omega)}\leq C \| f\|_{L^q(\Omega)}^{1-\theta} \| f\|_{H^1(\Omega)}^\theta, \quad &&\forall \, f \in H^1(\Omega), 1\leq q\leq p<\infty, \ \theta= 1-\frac{q}{p}, \ d=2,\\
\label{GN3}
&\| f\|_{L^\infty(\Omega)}\leq C \| f\|^{1-\theta} \| f\|_{W^{1,q}(\Omega)}^\theta, \quad &&\forall \, f \in W^{1,q}(\Omega), \ q>3, \ \theta= \frac{3q}{5q-6}, \ d=3.
\end{align}
\end{itemize}

\subsection{Generalized Gronwall lemma.}
\label{Gronwall} 
We report a generalized Gronwall type lemma (see e.g.~\cite{G2020}). 

\begin{lemma}
\label{GL2}
Let $f$ be a positive absolutely continuous function on $[0,T]$ and $g$, $h$ two summable functions on $[0,T]$ which satisfy the differential inequality
$$
\ddt f(t)\leq g(t)f(t)\log\big( C+ f(t)\big)+h(t)
$$
for almost every $t\in [0,T]$ and for some $C>1$. Then, we have
$$
f(t)\leq  \big( C+f(0)\big)^{\mathrm{e}^{\int_0^t g(\tau)\, \d \tau}} \mathrm{e}^
{ \int_0^t \mathrm{e}^{\int_\tau^t g(s)\, \d s} h(\tau) \, \d \tau},
\quad \forall 
\, t \in [0,T].
$$
\end{lemma}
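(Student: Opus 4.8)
The plan is to linearize the hypothesis by the substitution $y(t):=\log\big(C+f(t)\big)$, which turns the super-linear (logarithmic) Gronwall inequality into an ordinary \emph{linear} one, and then to integrate that linear inequality explicitly via an integrating factor. Throughout I will use that $g,h\ge 0$: this is implicit in the statement, and it is genuinely needed, since already with $g\equiv 0$ and $\int_0^t h<0$ (and $f$ still positive) the asserted bound may fail.

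First I would record the elementary facts that make the reduction work. Since $f$ is positive and $C>1$, we have $C+f(t)>1$ on $[0,T]$, so $y=\log(C+f)$ is well defined, nonnegative, and — being the composition of the $C^1$ map $\log$ on $[1,\infty)$ with the absolutely continuous function $C+f$ — itself absolutely continuous; in particular
\[
\ddt y(t)=\frac{f'(t)}{C+f(t)}\qquad\text{for a.e. } t\in[0,T].
\]
Moreover $0\le \frac{f(t)}{C+f(t)}\le 1$ and $0<\frac{1}{C+f(t)}\le 1$. Dividing the differential inequality by $C+f(t)>0$ and combining these bounds with $g,h\ge 0$ and $y\ge 0$ then yields
\[
\ddt y(t)\le g(t)\,\frac{f(t)}{C+f(t)}\,\log\big(C+f(t)\big)+\frac{h(t)}{C+f(t)}\le g(t)\,y(t)+h(t),
\]
so that $y$ obeys the classical linear differential inequality $y'\le g\,y+h$.

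It then remains to integrate this in the standard way. Writing $G(t)=\int_0^t g(\tau)\,\d\tau$ and multiplying by the integrating factor $\mathrm{e}^{-G(t)}$ turns the inequality into $\ddt\big(\mathrm{e}^{-G(t)}y(t)\big)\le \mathrm{e}^{-G(t)}h(t)$; integrating from $0$ to $t$ and rearranging gives
\[
y(t)\le \mathrm{e}^{G(t)}\,y(0)+\int_0^t \mathrm{e}^{G(t)-G(\tau)}\,h(\tau)\,\d\tau.
\]
Exponentiating (the exponential being increasing) and using $\mathrm{e}^{y(t)}=C+f(t)\ge f(t)$ together with $\mathrm{e}^{\mathrm{e}^{G(t)}y(0)}=\big(C+f(0)\big)^{\mathrm{e}^{G(t)}}$ and $\mathrm{e}^{G(t)-G(\tau)}=\mathrm{e}^{\int_\tau^t g(s)\,\d s}$ produces exactly the claimed bound.

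The only delicate point is the passage to the linear inequality in the second step: it rests on the bounds $\frac{f}{C+f}\le 1$ and $y=\log(C+f)\ge 0$ (which require $C>1$ and $f\ge 0$) and crucially on the nonnegativity of $g$ and $h$. Once this reduction is in place, the remainder — the integrating-factor computation and the final exponentiation — is entirely routine, the only bookkeeping being the tracking of the constant $C$ and the null set on which the pointwise inequality may be violated.
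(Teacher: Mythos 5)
Your proof is correct and follows essentially the same route as the paper's: both substitute $y=\log(C+f)$ (the paper uses $C=\mathrm{e}$), reduce to the linear inequality $y'\le g\,y+h$ by bounding $f/(C+f)\le 1$ and $h/(C+f)\le h$, and conclude with the standard integrating-factor Gronwall argument before exponentiating. Your explicit observation that the sign conditions $g,h\ge 0$ are genuinely needed (and only implicit in the statement) is a worthwhile clarification, but it does not change the argument.
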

\smallskip

\section{Assumptions and Elliptic Structure of the System}
\label{pre}
\setcounter{equation}{0}

We present our basic assumptions, which will be kept for the two dimensional and for the three dimensional case:
\begin{itemize}
\item[(A1)] the function $S$ is given by \eqref{S}, where $m$ is a positive constant and 
$h:[-1,1]\rightarrow [-1,1]$ is of class $\mathcal{C}^2([-1,1])$. 
More precisely, setting $\hgiu:=\min_{s\in[-1,1]} h(s)$ and $\hsu:=\max_{s\in[-1,1]} h(s)$,
we suppose that
\begin{equation}
\label{hp:h}
  -1 < \frac{\hgiu}{|\Omega|m}, \qquad 
  \frac{\hsu}{|\Omega|m} < 1;   
\end{equation}
\item[(A2)] the function $\Psi$ is the logarithmic potential defined in \eqref{LOG};

\item[(A3)] the initial datum $\fhi_0$ lies in $H^2(\Omega)\cap L^\infty(\Omega)$ 
with $\| \vp_0\|_{L^\infty(\Omega)}\leq 1$ and $\partial_{\n} \fhi_0 = 0$ 
on $\partial\Omega$. Moreover, the following additional condition holds:
\begin{equation}
\label{fhi01}
  \mu_0:= - \Delta \fhi_0 + \Psi'(\fhi_0) \in H^1(\Omega).
\end{equation}
\end{itemize}

\begin{remark}\label{rem:h}
Notice that assumption \eqref{hp:h} is required in order to prove that the total tumor mass during the evolution remains strictly 
in between the critical values $-1$ and $1$ (cf.~\eqref{mass}). 
\end{remark}

\begin{remark}\label{rem:H}
 By Sobolev's embeddings and standard regularity results for elliptic problems with maximal monotone perturbations 
 (see, e.g., \cite[Lemma 7.4]{GGW2018}), \eqref{fhi01} entails that $\fhi_0 \in W^{2,6}(\Omega)$ if $d=3$ and $\fhi_0 \in W^{2,p}(\Omega)$ if $d=2$ for any $p\in [2,\infty)$. Then, defining
 $q_0$ as the solution of the elliptic problem
 \begin{align}
  \label{q01}
  \begin{cases}
 - \Delta q_0 = - m \fhi_0 + h(\fhi_0) + \div(\fhi_0 \nabla \mu_0) 
       \quad &\text{ in } \Omega,\\
 q_0 = 0 \quad &\text{ on } \partial\Omega,
     \end{cases}
 \end{align}
the elliptic regularity permits us to check that $q_0\in H^1_0(\Omega)$.
Thus, setting also
 \begin{equation}
 \label{u0}
   \uu_0 := -\nabla q_0 - \fhi_0 \nabla \mu_0,
 \end{equation}
 we also have that $\uu_0\in L^2(\Omega)$.
\end{remark}

Next, we discuss the {\it elliptic} structure of the system \eqref{CHHS}-\eqref{bc}. First, we consider the problem for the pressure $q$ obtained from \eqref{CHHS}$_1$-\eqref{CHHS}$_2$
\begin{equation}
\label{Smu2-0}
\begin{cases}
-\Delta q= S+ \div (\vp \nabla \mu), \quad &\text{in } \ \Omega,\\
q=0,\quad &\text{on }\ \partial \Omega.
\end{cases}
\end{equation}
For regularity purposes that will be developed in the Sections \ref{S3} and \ref{S4}, we reformulate the Poisson equation \eqref{Smu2-0}$_1$ with the aim to obtain a right-hand side containing only one spatial derivative that acts on the chemical potential $\mu$. 
We start by substituting the relation of the velocity in \eqref{CHHS}$_3$ which leads to the expression
$$
\partial_t \vp + \div \big( (-\nabla q-\vp \nabla \mu)\vp\big)=\Delta \mu +S,
$$
which is equivalent to
$$
\div \big( (1+\vp^2)\nabla \mu \big)= \partial_t \vp - \div(\vp \nabla q) -S.
$$
Then, using the above relation, we write
\begin{align*}
\div(\vp \nabla \mu)&= \div \Big( \frac{\vp}{1+\vp^2} (1+\vp^2)\nabla \mu\Big)\\
&=(1+\vp^2) \nabla \mu \cdot \nabla \Big( \frac{\vp}{1+\vp^2} \Big) + \frac{\vp}{1+\vp^2}\div \big((1+\vp^2)\nabla \mu\big)\\
&=(1+\vp^2) \nabla \mu \cdot \nabla \Big( \frac{\vp}{1+\vp^2} \Big) + \frac{\vp}{1+\vp^2} \big( \partial_t \vp -S\big)- \frac{\vp}{1+\vp^2}
\div(\vp \nabla q)\\
&= (1+\vp^2) \nabla \mu \cdot \nabla \Big( \frac{\vp}{1+\vp^2} \Big) + \frac{\vp}{1+\vp^2} \big( \partial_t \vp -S\big)\\
&\quad - \div\Big( \frac{\vp^2}{1+\vp^2} \nabla q\Big)+ \nabla \Big( \frac{\vp}{1+\vp^2} \Big)\cdot \big( \vp \nabla q \big).
\end{align*}
It is worth mentioning that the advantage of the above relation is that the latter right-hand side does not depend on higher derivatives of the chemical potential $\mu$ other than the first one. Recalling that $-\Delta q= S+\div(\vp \nabla \mu)$, we then find
\begin{align*}
-\Delta q &= (1+\vp^2) \nabla \mu\cdot \nabla \Big( \frac{\vp}{1+\vp^2}\Big)- \div \Big( \frac{\vp^2}{1+\vp^2} \nabla q\Big) \\
&\quad + \nabla \Big( \frac{\vp}{1+\vp^2}\Big) \cdot \big( \vp \nabla q \big)+ \frac{\vp}{1+\vp^2}(\partial_t \vp-S) + S, 
\end{align*} 
which is equivalent to 
\begin{align}
\label{comp}
-\div \Big( \frac{1}{1+\vp^2} \nabla q\Big)&= (1+\vp^2) \nabla \mu\cdot \nabla \Big( \frac{\vp}{1+\vp^2}\Big)+ \nabla \Big( \frac{\vp}{1+\vp^2}\Big) \cdot \big( \vp \nabla q \big) \notag\\
&\quad + \frac{\vp}{1+\vp^2}(\partial_t \vp-S) + S.
\end{align}
Multiplying \eqref{comp} with $(1+\vp^2)$ allows us to rewrite it further as follows
\begin{align}
\nonumber
-\Delta q&= (1+\vp^2)\nabla \Big(\frac{1}{1+\vp^2}\Big)\cdot \nabla q +(1+\vp^2)^2 \nabla \mu\cdot \nabla \Big( \frac{\vp}{1+\vp^2}\Big)\\
\nonumber
&\quad + (1+\vp^2) \vp \nabla \Big( \frac{\vp}{1+\vp^2}\Big) \cdot \nabla q+ \vp(\partial_t \vp-S) + (1+\vp^2)S\\
\nonumber
&= \Big( \frac{\vp(1-\vp^2)}{1+\vp^2}-\frac{2\vp}{1+\vp^2} \Big) \nabla \vp \cdot \nabla q
+ (1-\vp^2) \nabla \mu \cdot \nabla \vp+ \vp \partial_t \vp + S (1+\vp^2-\vp)\\
\label{comp2}
&=  \frac{-\vp-\vp^3}{1+\vp^2} \nabla \vp \cdot \nabla q
+ (1-\vp^2) \nabla \mu \cdot \nabla \vp+ \vp \partial_t \vp + S (1+\vp^2-\vp).
\end{align}
Here we have used 
$$
\nabla\Big( \frac{1}{1+\vp^2} \Big)= \frac{-2\vp}{(1+\vp^2)^2} \nabla \vp, \quad \nabla \Big( \frac{\vp}{1+\vp^2}\Big)= \frac{1-\vp^2}{(1+\vp^2)^2} \nabla \vp.
$$
Therefore, we summarize the above computations by reporting a second elliptic problem for the pressure
\begin{equation}
\label{PP2}
\begin{cases}
-\Delta q= \frac{-\vp-\vp^3}{1+\vp^2} \nabla \vp \cdot \nabla q
+ (1-\vp^2) \nabla \mu \cdot \nabla \vp+ \vp \partial_t \vp + S (1+\vp^2-\vp), \quad &\text{in } \ \Omega,\\
q=0,\quad &\text{on }\ \partial \Omega.
\end{cases}
\end{equation}
We now consider the elliptic problem for the chemical potential $\mu$. First, we have from \eqref{CHHS}$_3$ that
$$
-\Delta \mu = S-\partial_t \vp- \div (\uu \vp).
$$
Exploiting \eqref{CHHS}$_1$ and the boundary conditions \eqref{bc}, we observe that
\begin{align*}
\nabla \mu\cdot \n &= (\uu\cdot \n) \vp= - (\nabla q \cdot \n)\vp - (\vp \nabla \mu \cdot \n) \vp,
\end{align*} 
which can be rewritten as 
$$
(1+\vp^2)(\nabla \mu \cdot \n)= -(\nabla q \cdot \n)\vp.
$$
Then, using the boundary condition satisfied by $\vp$, we obtain 
$$
\partial_\n \mu= - \nabla \Big( \frac{\vp}{1+\vp^2} q\Big)\cdot \n.
$$
Summing up, we deduce the Neumann problem
\begin{equation}
\label{muP}
\begin{cases}
-\Delta \mu = S-\partial_t \vp- \div (\uu \vp), \quad &\text{in } \ \Omega,\\
\partial_\n \mu= - \nabla \Big( \frac{\vp}{1+\vp^2} q\Big)\cdot \n, \quad &\text{on }\ \partial \Omega.
\end{cases}
\end{equation}
Finally, in order to control the boundary term in \eqref{muP}, we also deduce an elliptic problem for the function $\frac{\vp}{1+\vp^2}q$.
By the relation \eqref{comp}, we have 
\begin{align}
\nonumber
& \Delta \Big( \frac{\vp}{1+\vp^2}q \Big) =
\div \Big( \frac{\vp}{1+\vp^2} \nabla q\Big)+ 
\div \Big( \nabla \big(\frac{\vp}{1+\vp^2}\big) q \Big)\\
\nonumber
& \quad= \vp \, 
\div \Big( \frac{\nabla q}{1+\vp^2}\Big)+ \frac{1}{1+\vp^2} \nabla q \cdot \nabla \vp+ \Delta \Big(\frac{\vp}{1+\vp^2}
\Big) q+ \nabla \Big( \frac{\vp}{1+\vp^2}\Big) \cdot \nabla q\\
\nonumber
& \quad = -(1+\vp^2) \vp \nabla \mu \cdot \nabla \Big( \frac{\vp}{1+\vp^2}\Big) - \vp^2 \nabla \Big( \frac{\vp}{1+\vp^2}\Big) \cdot \nabla q\\
\nonumber
&\quad\quad -\frac{\vp^2}{1+\vp^2} (\partial_t \vp- S)- \vp S+ \frac{1}{1+\vp^2} \nabla q \cdot \nabla \vp+ \Delta \Big(\frac{\vp}{1+\vp^2}
\Big) q+ \nabla \Big( \frac{\vp}{1+\vp^2}\Big) \cdot \nabla q\\
\nonumber
&\quad =-(1+\vp^2)\vp \frac{1-\vp^2}{(1+\vp^2)^2} \nabla \vp \cdot \nabla \mu + \Big( -\frac{1-\vp^2}{(1+\vp^2)^2} \vp^2+ \frac{1}{1+\vp^2} + \frac{1-\vp^2}{(1+\vp^2)^2} \Big) \nabla \vp\cdot \nabla q\\
\nonumber
&\quad\quad -\frac{\vp^2}{1+\vp^2}(\partial_t \vp-S)- \vp S+
\Delta \Big( \frac{\vp}{1+\vp^2} \Big) q\\
\nonumber
&\quad=-(1+\vp^2)\vp \frac{1-\vp^2}{(1+\vp^2)^2} \nabla \vp \cdot \nabla \mu + \Big( \frac{1+\vp^2+(1-\vp^2)^2}{(1+\vp^2)^2} \Big) \nabla \vp\cdot \nabla q\\
\label{ell:q}
&\quad\quad -\frac{\vp^2}{1+\vp^2}(\partial_t \vp-S)- \vp S+
\Delta \Big( \frac{\vp}{1+\vp^2} \Big) q.
\end{align} 
Thus, in light of the homogeneous Dirichlet boundary condition for $q$, we infer that
\begin{equation}
\label{phiqP}
\begin{cases}
\Delta \Big( \frac{\vp}{1+\vp^2}q \Big)= -\vp \frac{1-\vp^2}{1+\vp^2} \nabla \vp \cdot \nabla \mu + \Big( \frac{1+\vp^2+(1-\vp^2)^2}{(1+\vp^2)^2} \Big) \nabla \vp\cdot \nabla q\\
\qquad \qquad \quad \ \ \ -\frac{\vp^2}{1+\vp^2}(\partial_t \vp-S)- \vp S+\Delta \Big( \frac{\vp}{1+\vp^2} \Big) q, \quad &\text{in } \ \Omega,\\
 \frac{\vp}{1+\vp^2}q= 0, \quad &\text{on }\ \partial \Omega.
\end{cases}
\end{equation}

\section{Global Well-Posedness of Strong Solutions in Two Dimensions}
\label{S3}
\setcounter{equation}{0}

In this section we prove global existence and uniqueness of strong solutions to system \eqref{CHHS}-\eqref{bc} in two dimensions. 
Under the assumptions stated in Section \ref{pre}, our result is formulated as follows: 
\begin{theorem}\label{teo:2d}
Let $\Omega$ be a bounded domain with smooth boundary in $\mathbb{R}^2$. Assume the
above conditions\/ {\rm (A1)--(A3)} hold. Then, for any $T>0$, there exists a unique strong 
solution $(\uu,q,\vp)$ to system \eqref{CHHS}-\eqref{bc} such that
\begin{align}
\label{regou}
& \uu \in L^\infty(0,T; L^2(\Omega))\cap L^4(0,T;H^1(\Omega)), \\
\label{regoq}
& q \in L^\infty(0,T;H_0^1(\Omega))\cap L^4(0,T;H^2(\Omega)),\\
\label{regofhi}
& \vp \in L^\infty(0,T;W^{2,p}(\Omega))\cap H^1(0,T;H^1(\Omega)), \quad \forall \, p\in [1,\infty),\\
\label{bound-prop}
&\vp \in L^{\infty}(\Omega \times (0,T)): \ |\vp(x,t)|<1 \text{ a.e. }(x,t) \in \Omega\times(0,T),\\
\label{regomu}
& \mu \in L^\infty(0,T;H^1(\Omega)) \cap L^4(0,T;H^2(\Omega)),\\
\label{regopsi}
&\Psi'(\fhi), \Psi''(\fhi) \in L^\infty(0,T;L^p(\Omega)), \quad \forall \, p\in [1,\infty).
\end{align}
The strong solution satisfies the system \eqref{CHHS}-\eqref{bc} 
almost everywhere in $\Omega \times (0,\infty)$. In addition, it fulfills the initial value $\vp(\cdot,0)=\vp_0 (\cdot)$.
\end{theorem}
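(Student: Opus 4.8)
The plan is to prove Theorem \ref{teo:2d} by a three-stage argument: (i) construct approximate solutions for which all the manipulations below are rigorous, (ii) derive a priori estimates uniform in the approximation parameter, and (iii) pass to the limit and establish uniqueness. For stage (i) I would regularize the singular Flory--Huggins potential \eqref{LOG}, replacing $\Psi$ by a family $\Psi_\eps\in C^2(\mathbb{R})$ with globally bounded second derivative and $\Psi_\eps''\ge-\theta_0$, coupled with a Faedo--Galerkin (or semi-implicit time) discretization; existence of approximate solutions then follows from standard ODE/fixed-point theory, and all the algebraic identities of Section \ref{pre}, in particular the elliptic reformulations \eqref{PP2}, \eqref{muP} and \eqref{phiqP}, hold at the approximate level. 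The whole difficulty is thus concentrated in stage (ii).

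The first estimate is on the total mass. Integrating \eqref{CHHS}$_3$ over $\Omega$ and using \eqref{bc} (the fluxes coming from $\Delta\mu$ and $\div(\u\vp)$ cancel), the spatial average $\bar\vp$ solves the scalar ODE $\tfrac{\d}{\d t}\bar\vp=-m\bar\vp+\overline{h(\vp)}$. Since $h$ takes values in $[\hgiu,\hsu]$, a comparison argument together with \eqref{hp:h} shows that $\bar\vp(t)$ stays strictly inside $(-1,1)$, with a margin depending only on the data and on $T$ (this is the role of (A1), cf.\ Remark \ref{rem:h}). The second, basic, estimate is the energy identity: testing \eqref{CHHS}$_3$ by $\mu$, testing the Darcy law \eqref{CHHS}$_1$ by $\u$, and adding (all boundary contributions cancel thanks to \eqref{bc}) gives
\begin{equation*}
\ddt \E(\vp)+\|\u\|^2+\|\nabla\mu\|^2=\int_\Omega(\mu+q)S\,\d x .
\end{equation*}
The right-hand side is handled by splitting $\mu=\bar\mu+(\mu-\bar\mu)$, using the Poincar\'e inequality \eqref{poincare}, the boundedness of $S$, and the classical control of $\bar\mu=\overline{\Psi'(\vp)}$ in terms of $\|\nabla\mu\|$ and the energy (valid precisely because $\bar\vp$ is strictly separated from $\pm1$); the pressure term is absorbed after testing \eqref{Smu2-0} by $q$ to get $\|\nabla q\|\le C(1+\|\nabla\mu\|)$. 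A Gronwall argument then yields $\vp\in L^\infty(0,T;H^1)$, $\mu\in L^2(0,T;H^1)$, $\u,\nabla q\in L^2(0,T;L^2)$ and $\Psi(\vp)\in L^\infty(0,T;L^1)$.

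The heart of the proof is the higher-order estimate, which is also where I expect the main obstacle. Here I would exploit the reformulations of Section \ref{pre}: \eqref{PP2} and \eqref{phiqP} express $\|q\|_{H^2}$ and $\|\tfrac{\vp}{1+\vp^2}q\|_{H^2}$ through right-hand sides containing at most one derivative of $\mu$, and \eqref{muP} then yields an $H^2$-estimate for $\mu$ controlled only by $\|\nabla\mu\|$, $\|\u\|$ and the Lipschitz norm of $\vp$ --- the crucial point being that no second derivative of $\mu$ appears on the right. Testing \eqref{CHHS}$_3$ by $\partial_t\mu$, using $\partial_t\mu=-\Delta\partial_t\vp+\Psi''(\vp)\partial_t\vp$ (so that the monotone part of $\Psi''$ gives a nonnegative contribution and only a controllable $-\theta_0\|\partial_t\vp\|^2$ is lost), and treating the boundary term via \eqref{phiqP}, I would arrive at a differential inequality of the form
\begin{equation*}
\ddt\|\nabla\mu\|^2+c\,\|\nabla\partial_t\vp\|^2\le g(t)\,\|\nabla\mu\|^2\log\!\big(C+\|\nabla\mu\|^2\big)+h(t),
\end{equation*}
with $g,h\in L^1(0,T)$. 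The logarithm is the two-dimensional signature: the Lipschitz norm of $\vp$ entering the elliptic estimates is bounded through the Br\'ezis--Gallouet--Wainger inequality \eqref{BWd2} by $\|\vp\|_{H^2}$ times a logarithmic factor of a higher norm, and $\|\vp\|_{H^2}$, $\|\vp\|_{W^{2,p}}$ are in turn controlled by $\|\mu\|$-norms via elliptic regularity for $-\Delta\vp+\Psi'(\vp)=\mu$. The generalized Gronwall Lemma \ref{GL2} then closes the estimate on every $[0,T]$, giving $\mu\in L^\infty(0,T;H^1)$ and $\partial_t\vp\in L^2(0,T;H^1)$. Feeding these bounds back into the elliptic estimates produces $\mu,q\in L^4(0,T;H^2)$, hence $\u\in L^4(0,T;H^1)$ and, together with $\|\nabla q\|\le C(1+\|\nabla\mu\|)$, the $L^\infty$-in-time bounds \eqref{regou}--\eqref{regoq}; elliptic regularity for the singular problem finally gives $\vp\in L^\infty(0,T;W^{2,p})$ and $\Psi'(\vp),\Psi''(\vp)\in L^\infty(0,T;L^p)$, the latter forcing the strict separation $|\vp|<1$ a.e.

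The remaining steps are comparatively routine. With the uniform bounds in hand I would pass to the limit in the approximating scheme by Aubin--Lions compactness (strong convergence of $\vp$ and $\nabla\vp$, weak/weak-$\ast$ convergence of the remaining quantities), identifying the limit of $\Psi_\eps'(\vp_\eps)$ with $\Psi'(\vp)$ via monotonicity and the separation property. Uniqueness is obtained by writing the system for the difference of two solutions and testing with suitable multipliers: in two dimensions the transport and coupling terms are controlled by the Ladyzhenskaya inequality \eqref{L} and the already-established regularity, while the convexity of the singular part of $\Psi$ provides the sign needed to absorb the potential difference, so that a standard Gronwall argument yields continuous dependence and, in particular, uniqueness.
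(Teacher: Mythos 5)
Your overall architecture matches the paper's: regularization of the potential, the mass ODE for $\overline{\vp}$ under \eqref{hp:h}, the energy identity with the right-hand side $\int_\Omega S(q+\mu)$ controlled through $\overline{\mu}=\overline{\Psi'(\vp)}$ and the Dirichlet problem for $q$, the elliptic reformulations \eqref{PP2}, \eqref{muP}, \eqref{phiqP} combined with Br\'ezis--Gallouet--Wainger and the logarithmic Gronwall lemma, and finally compactness and a Gronwall-based uniqueness argument. However, there is a genuine gap at the single most delicate point, the higher-order estimate. You propose to obtain a differential inequality for $\|\nabla\mu\|^2$ by testing \eqref{CHHS}$_3$ with $\partial_t\mu$ alone. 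After integrating by parts with the boundary condition $\partial_\n\mu=(\uu\cdot\n)\vp$, the transport term produces
\begin{equation*}
-\big(\div(\uu\,\vp),\partial_t\mu\big)=\int_\Omega \vp\,\uu\cdot\nabla\partial_t\mu\,\d x,
\end{equation*}
and $\nabla\partial_t\mu$ is not controlled by any quantity at your disposal (nor can you integrate by parts in time, since $\partial_t\uu$ is equally out of reach). The paper's resolution is structural: one \emph{also} differentiates the Darcy law \eqref{CHHS}$_1$ in time and tests it with $\uu$, which produces the exactly opposite term $-\int_\Omega\vp\,\nabla\partial_t\mu\cdot\uu\,\d x$ together with $\frac12\ddt\|\uu\|^2$ and $(\nabla\partial_t q,\uu)=-\ddt(q,S)+(q,\partial_t S)$. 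The two dangerous terms cancel, and the quantity whose evolution one actually controls is $H=\frac12\|\nabla\mu\|^2+\frac12\|\uu\|^2-(S,\mu)-(S,q)$, not $\|\nabla\mu\|^2$ by itself; the remaining hard term is then $-\int_\Omega\partial_t\vp\,\nabla\mu\cdot\uu\,\d x$, which is where the $H^2$-estimates for $\mu$ and $q$ from \eqref{PP2}--\eqref{phiqP} enter. Without this cancellation your inequality cannot be derived, so the step as written would fail.

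A secondary, smaller omission concerns uniqueness: besides the standard multipliers ($\uu$, $\mu$, $\partial_t\vp$ for the difference system), the paper must also test the pressure difference equation with $\varepsilon(-\Delta)^{-1}q$ in order to generate a $\varepsilon\|q\|^2$ term on the left that absorbs $\int_\Omega S\,q$; ``suitable multipliers'' hides exactly the nontrivial choice here. This is fixable along the lines you indicate, but it is not routine in the way your last paragraph suggests.
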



\subsection{A Priori Estimates}
\label{subsec:en}

In this part we prove the {\it a priori} estimates for to system \eqref{CHHS}-\eqref{bc} which are needed 
to establish the existence of a strong solutions as stated in Theorem~\ref{teo:2d}. We first derive the basic bounds resulting
from the evolution of the total mass and of the physical energy, and then we carry out the higher order estimates that entail the regularity \eqref{regou}-\eqref{regopsi} for the solution.  
For clarity of presentation, these estimates are performed in a formal way without referring to any explicit 
regularization or approximation of the system. A rigorous regularization strategy compatible with the estimates below could 
be written by following the lines of the argument in \cite{FLRS2018} (cf.~also \cite{G2020,GGW2018}). In particular, it is 
mentioning that the physical bound $\vp \in L^\infty(\Omega \times(0,T))$ such that $\|\vp\|_{L^\infty(\Omega \times (0,T))}\leq 1$, 
which holds in the limit because of the occurrence of the logarithmic nonlinearity $\Psi$,  is not usually conserved in such approximation 
schemes. In view of this issue, we will first show for simplicity the total mass and the basic energy estimates by considering a solution
that satisfies the physical bound \eqref{bound-prop} (see Total Mass Dynamics and Energy Estimates - Part I), and then we will adapt our 
argument for a solution which is not essentially bounded (see Energy Estimates - Part II for more details). Lastly, we will address the
crucial part of the proof which consists of the global-in-time higher order estimates of the solutions. Once again, also in that part we will proceed without using any boundedness assumption on the solution.  
\medskip

\noindent
\textbf{Total Mass Dynamics}.
Integrating \eqref{CHHS}$_3$ over $\Omega$, using the boundary conditions \eqref{bc} and the form of $S$, we find the evolution equation
$$
\ddt \overline{\vp}+ m\overline{\vp} = \frac{1}{|\Omega|} \int_{\Omega} h(\vp) \,  \d x.
$$
Solving this linear differential equation, we obtain
$$
\overline{\vp}(t)= \overline{\vp}(0)\mathrm{e}^{-mt}+
\mathrm{e}^{-mt} \int_0^t  \mathrm{e}^{ms}\frac{1}{|\Omega|} \int_{\Omega} h(\vp(x,s)) \,  \d x \, \d s,
$$
whence we can easily deduce the estimates
\begin{align}
\label{mass-law}
& \overline{\vp}(t) \le \overline{\vp}(0)\mathrm{e}^{-mt}
  + \frac{\hsu}{|\Omega|m} \big(1-\mathrm{e}^{-mt}\big),\\
\label{mass-law2}
& \overline{\vp}(t) \ge \overline{\vp}(0)\mathrm{e}^{-mt}
  + \frac{\hgiu}{|\Omega|m} \big(1-\mathrm{e}^{-mt}\big).
\end{align}
Then, viewing the above right-hand sides as convex combinations
and exploiting the assumption \eqref{hp:h}, we can easily deduce that
there exist two constants $c_1$ and $c_2$ depending only on 
$\overline{\vp}(0)$, $\Omega$, $h$ and $m$ such that
%
%
\begin{equation}
\label{mass}
-1<c_1\leq \overline{\vp}(t)\leq c_2 <1, \quad \forall \, t\geq 0.
\end{equation}
Note that this property holds both in the two and the three dimensional setting with no variation in the proof.
\medskip

\noindent
\textbf{Energy Estimates - Part I}.
We deduce the basic energy bound deriving from the variational structure of the system. Due to the presence of a mass source, the resulting variational equality will 
contain forcing terms on the right-hand side accounting for mass inflow. As for the mass conservation property \eqref{mass}, 
this part will also hold both for $d=2$ and $d=3$. 
As said, for convenience of presentation, we assume here the {\it a priori}\/ bound 
$\vp \in L^\infty(\Omega \times(0,T))$ such that 
\begin{equation}
\label{phi-bound}
\|\vp\|_{L^\infty(\Omega \times (0,T))}\leq 1.
\end{equation}
The argument below will be adapted in the subsequent section to a possible approximation of the system without taking advantage of the bound \eqref{phi-bound}.
\smallskip

We recall the total free energy associated with system \eqref{CHHS} (cf. \eqref{FE})
\begin{equation}
\label{energy}
\E(\vp)= \int_\Omega  \frac12 |\nabla \vp|^2
+ \Psi(\vp) \, \d x.
\end{equation}
We multiply \eqref{CHHS}$_1$ by $\uu$, \eqref{CHHS}$_3$ by $\mu$ and \eqref{CHHS}$_4$ 
by $\partial_t \vp$. Integrating over $\Omega$, using the boundary conditions \eqref{bc} 
and adding up the resulting relations, we obtain 
\begin{equation}
\label{EI}
\ddt E(\vp)+ \| \nabla \mu\|^2 + \| \uu\|^2 = \int_{\Omega} S q + S \mu \, \d x.
\end{equation}
In order to control the right-hand side, we recall the inequality (see, e.g., \cite{MZ04})
$$
c \|F'(\vp)\|_{L^1(\Omega)}\leq \int_{\Omega} (\vp-\overline{\vp})F'(\vp) \, \d x+ \widetilde{C},
$$
where we have set $F(s)=\Psi(s)+ \frac{\theta_0}{2}s^2$ (the convex part of the potential). Here, the positive constants $c$ and $\widetilde{C}$ may depend on $c_1$ and $c_2$.
Multiplying \eqref{CHHS}$_4$ by $\vp-\overline{\vp}$ and integrating over $\Omega$, we find
$$
\|\nabla \vp\|^2+ \int_{\Omega} (\vp-\overline{\vp})F'(\vp) \, \d x
= \int_{\Omega} \mu (\vp-\overline{\vp})\, \d x+ \theta_0 \int_{\Omega}\vp (\vp-\overline{\vp}) \, \d x.
$$
By using Poincar\'{e}'s inequality, the boundedness of $\vp$ and the above inequality, we have 
$$
\|\nabla \vp\|^2 + \|F'(\vp)\|_{L^1(\Omega)}\leq C(1+  \| \nabla \mu\|),
$$
for some positive constant $C$\footnote{In what follows, the notation $C$ will stand for a general constant depending on the parameters of the system which may vary from line to line.}.
Since $|\overline{\mu}|= |\overline{\Psi'(\vp)}|$, we obtain
\begin{equation}
\label{mass-mu}
|\overline{\mu}|\leq C(1+ \| \nabla \mu\|).
\end{equation}
This entails that 
\begin{equation}
\label{Smu-0}
\Big| \int_{\Omega} S \mu \, \d x \Big| \leq C(1+\| \nabla \mu\|).
\end{equation}
By the theory for the Laplace equation with Dirichlet boundary condition applied to the pressure problem \eqref{Smu2-0}, we have 
$$
\| q\|_{H^1(\Omega)}\leq C \| S\| + \| \vp \nabla \mu\|.
$$
Thus, using once more the boundedness of $\fhi$ and $h(\fhi)$, we deduce that
\begin{equation}
\label{Sq-0}
\Big| \int_{\Omega} S q \, \d x \Big| \leq C(1 + \| \nabla \mu\|).
\end{equation}
Combining \eqref{EI}, \eqref{Smu-0} and \eqref{Sq-0} together, we eventually arrive at
$$
\ddt E(\vp)+ \frac12 \| \nabla \mu\|^2 + \| \uu\|^2\leq C.
$$
Integrating on the time interval $[0,T]$, we deduce that
\begin{equation}
\label{energybalance}
 \E(\vp(t))+ \int_0^t \int_{\Omega} |\nabla \mu|^2 + |\uu|^2 \, \d x \, \d \tau \leq \E(\vp_0)+CT, \quad \forall \, t \geq 0.
\end{equation}
Thus, recalling also \eqref{mass-mu}, we can infer that
$$
\vp \in L^\infty(0,T;H^1(\Omega)), \quad \mu\in L^2(0,T;H^1(\Omega)),
  \quad \uu\in L^2(0,T;L^2(\Omega)), \quad q\in L^2(0,T;H^1_0(\Omega)).
$$
By the argument exploited in \cite{GGW2018} (cf. Lemmas 7.3 and 7.4, see also \cite[Theorem 2.2]{G2020}), we find
\begin{equation}
\label{fhi-reg1}
\vp \in L^4(0,T;H^2(\Omega))\cap L^2(0,T;W^{2,p}(\Omega)),
\end{equation}
where $p=6$ if $d=3$ and any finite $p$ if $d=2$. In addition, since 
$$
\| \partial_t \fhi\|_{\ast}\leq \| \nabla \mu\|+ \| \uu\| \| \fhi\|_{L^\infty(\Omega)}+ C \| S\|,
$$
it follows that
$$
\partial_t \vp \in L^2(0,T; (H^1(\Omega))').
$$
\begin{remark}
In light of \eqref{mass-mu}, the above argument can be easily modified in order to obtain a dissipative estimate (see \cite[Theorem 2.2]{GGW2018})
$$
\ddt E(\vp)+ k \Big( E(\vp)+ \| \nabla \mu\|^2+ \|\uu \|^2 \Big)\leq C,
$$
for some positive constants $k$ and $C$ which may depend on $m$, $h$, $c_1$, $c_2$, $\theta$, $\theta_0$ and $\Omega$, but are independent of the initial datum. 
\end{remark}
\medskip

\noindent
\textbf{Energy Estimates - Part II}. In view to adapt the calculation performed in the previous part to a possible approximation of the system, 
we will only assume that $\Psi$ has the form $\Psi(s) = F(s) - \frac{\theta_0}{2}s^2$, where $F$ 
is convex and goes at infinity at least as a polynomial of sufficiently large degree.
More precisely, we ask that\footnote{Here the exponent $5$ is probably not optimal and could be lowered. On the other
hand, this choice permits us to simplify the subsequent computations.}
\begin{equation}
\label{superquad}
  \liminf_{|r|\to \infty} \frac{F'(r)\sign(r)}{|r|^5} = 3 \kappa > 0.
\end{equation}
The above property is to be intended in the following way: as we consider, for $n\in {\mathbb N}$, a family $F_n$ of smooth approximants of the original 
(logarithmic) $F$ (see, e.g., \cite{FG2012, G2020, GGW2018}), \eqref{superquad} holds for $F_n$ with $\kappa$ independent of $n$.  Notice that, for the 
logarithmic $F$, the above property \eqref{superquad} may be thought to be valid with $\kappa=+\infty$ provided that we consider $F$ as a function taking values into ${\mathbb R}\cup\{+\infty\}$. 
For this reason, we will write the following part by using the notation
$F$ (rather than $F_n$), but the only coercivity property we will assume
is \eqref{superquad}. In addition to that, we assume that, as far as an approximation is concerned, the function $S=S(\fhi)$ needs 
to be extended for $|\fhi|\ge 1$. To this aim, we consider a regular extension $\tilde{h}$ of $h$ which satisfies the following properties:
\begin{itemize}
\item[$\bullet$]
The function $\tilde{h}: \mathbb{R}\rightarrow [\underline{h}-\varepsilon,\overline{h}+\varepsilon]$ is of class $\mathcal{C}^2(\mathbb{R})$ 
with bounded derivatives. In addition, it satisfies $\tilde{h}(s)=h(s)$ for $s\in [-1,1]$ and 
\begin{equation}
\label{hp:h2}
-1< \frac{\underline{h}-\varepsilon}{|\Omega| m}, \quad \frac{\overline{h}+\varepsilon}{|\Omega|m}<1.
\end{equation}
\end{itemize}
The existence of such function $\tilde{h}$ fulfilling the above properties can be obtained by a standard mollification procedure. Then, we define 
\begin{equation}
\label{S-ext}
S: \mathbb{R} \rightarrow \mathbb{R}, \quad S(s)= -m s + \tilde{h}(s).
\end{equation}
\smallskip

First of all, repeating the above argument for the total mass dynamics, we obtain
$$
\overline{\vp}(t)= \overline{\vp}(0)\mathrm{e}^{-mt}+
\mathrm{e}^{-mt} \int_0^t  \mathrm{e}^{ms}\frac{1}{|\Omega|} \int_{\Omega} \tilde{h}(\vp(x,s)) \,  \d x \, \d s. 
$$
As consequence, we find
\begin{equation}
\label{mass-law-ap}
 \overline{\vp}(0)\mathrm{e}^{-mt}
  + \frac{\hgiu-\varepsilon }{|\Omega|m} \big(1-\mathrm{e}^{-mt}\big)
  \leq \overline{\vp}(t) \le \overline{\vp}(0)\mathrm{e}^{-mt}
  + \frac{\hsu +\varepsilon}{|\Omega|m} \big(1-\mathrm{e}^{-mt}\big).
\end{equation}
Thanks to \eqref{hp:h2}, there exist $\tilde{c}_1$ and $\tilde{c_2}$ such that
\begin{equation}
\label{mass2}
-1<\tilde{c}_1\leq \overline{\vp}(t)\leq \tilde{c}_2 <1, \quad \forall \, t\geq 0.
\end{equation}
We proceed by recalling the energy equation (cf. \eqref{EI})
\begin{equation}
\label{EI-2}
\ddt E(\vp)+ \| \nabla \mu\|^2 + \| \uu\|^2 = \int_{\Omega} S q + S \mu \, \d x.
\end{equation}
The rest of the proof concerns the estimate of the right hand side.
To this aim, testing \eqref{CHHS}$_4$ by $\vp-\overline{\vp}$ 
and integrating over $\Omega$, we find
\begin{equation}
\label{g:01}
\|\nabla \vp\|^2+ \int_{\Omega} (\vp-\overline{\vp})F'(\vp) \, \d x
= \int_{\Omega} \mu (\vp-\overline{\vp})\, \d x+ \theta_0 \int_{\Omega}\vp (\vp-\overline{\vp}) \, \d x.
\end{equation}
We notice that, by the generalized Poincar\'{e} inequality \eqref{poincare} and the Young inequality, there holds
\begin{equation}
\label{g:01b}
  \int_{\Omega} \mu (\vp-\overline{\vp})\, \d x+ \theta_0 \int_{\Omega}\vp (\vp-\overline{\vp}) \, \d x
   \le \delta \| \nabla \mu \|^2 + C_\delta \| \nabla \fhi \|^2,
\end{equation}
for $\delta > 0$ to be chosen later, and correspondingly $C_\delta > 0$. 
We observe that 
\begin{equation} 
\label{g:01c}
 \frac12 \int_{\Omega} (\vp-\overline{\vp})F'(\vp) \, \d x \ge \kappa_1 \|F'(\vp)\|_{L^1(\Omega)} - C,
\end{equation}
where the positive constants $\kappa_1$ and $C$ may depend on $\tilde{c}_1$ and $\tilde{c}_2$
(cf.~\eqref{mass2})%
\footnote{If $F=F_n$ constitutes an approximation of the convex part of the logarithmic potential, then it can be shown 
that $\kappa_1$ and $C$ are also independent of $n$ (see \cite[Eqns (3.35)-(3.37)]{FG2012}).}. On the other hand, \eqref{mass} and \eqref{superquad} also entail that 
$$
  \frac12\int_{\Omega} (\vp-\overline{\vp})F'(\vp) \, \d x
   \ge \kappa \| \vp \|_{L^6(\Omega)}^6 - C.
$$
As a consequence, it follows from \eqref{g:01} that
\begin{equation}
\label{g:02}
  \kappa_1 \|F'(\vp)\|_{L^1(\Omega)}
   + \kappa \| \vp \|_{L^6(\Omega)}^6
  \le \delta \| \nabla \mu \|^2 + C_\delta \| \nabla \fhi \|^2 + C.
\end{equation}  
We multiply \eqref{g:02} by $M>0$ to be chosen later and sum the resulting inequality to \eqref{EI} to obtain
\begin{align}
\nonumber
 & \ddt E(\vp) + (1 - M\delta) \| \nabla \mu\|^2 
  + M  \kappa_1 \|F'(\vp)\|_{L^1(\Omega)}
  + M\kappa \| \vp \|_{L^6(\Omega)}^6  + \| \uu\|^2\\
 \label{EI2}
  & \quad\quad
   \le \int_{\Omega} S q + S \mu \, \d x
   + M C_\delta \| \nabla \fhi \|^2 + M C.
\end{align}
Now, integrating \eqref{CHHS}$_4$ over $\Omega$, we have
\begin{equation}
\label{g:02b}
  \ov{\mu} = \ov{\Psi'(\fhi)} 
    = \ov{F'(\fhi)} - \theta_0 \ov{\fhi}.
\end{equation}  
We notice that
\begin{align}
\nonumber
  \int_\Omega S \mu  \, \d x
  & = \int_\Omega S(\fhi) ( \mu - \ov{\mu} ) \, \d x
   + \ov{\mu} \int_\Omega S(\fhi) \, \d x\\
\nonumber
  & = \int_\Omega \big( S(\fhi) - \ov{S(\fhi)} \big) ( \mu - \ov{\mu} ) \, \d x
   + \ov{\mu} \int_\Omega S(\fhi)\, \d x \\
\nonumber
  & \le C \| S'(\fhi) \nabla \fhi \| \| \nabla \mu \|
   + | \ov{\mu}| \Big| \int_\Omega -m \fhi +\tilde{h}(\fhi) \, \d x \Big|\\
\nonumber
  & \le C (m+\| \tilde{h}'(\fhi)\|_{L^\infty(\Omega)}) \| \nabla \fhi \| \| \nabla \mu \|
   +C |\Omega| | \ov{\mu}|\\
\label{g:0x}
  & \le \frac14 \| \nabla \mu \|^2 + C \| \nabla \fhi \|^2 
   + C | \Omega | | \ov{F'(\fhi)} | + C (1 + \| \fhi \|^2 ).
\end{align}
Here we used \eqref{S-ext}, \eqref{mass2}, \eqref{g:02b} and the fact that $|\tilde{h}'(s)|\leq C$ for all $s\in \mathbb{R}$.
%
%
%
%
%
 %
 %
 %
%
Next, we consider the problem for the pressure $q$ obtained 
from \eqref{CHHS}$_1$-\eqref{CHHS}$_2$
\begin{equation}
\label{Smu2}
\begin{cases}
-\Delta q= S+ \div (\vp \nabla \mu) \quad &\text{in } \ \Omega,\\
q=0\,\quad &\text{on }\ \partial \Omega.
\end{cases}
\end{equation}
By the regularity theory for the Laplace equation with Dirichlet boundary condition, we have
\begin{equation}
\label{Sq1}
\| q\|_{W^{1,\frac32}(\Omega)} \leq C \| S\| + \| \vp \nabla \mu\|_{L^\frac32(\Omega)}
 \leq C(1+ \| \vp\|) + \| \vp \|_{L^{6}(\Omega)} \| \nabla \mu\|.
\end{equation}
Hence, using also Sobolev's embeddings,
\begin{align}
\nonumber
 \Big| \int_{\Omega} S q \, \d x\Big|
  & \le \| S(\fhi) \| \| q \|  \le C(1+\| \vp\|) \big( 1 +\|\vp \|+ \| \vp \|_{L^{6}(\Omega)} \| \nabla \mu\| \big)\\
\nonumber
  & \le \frac14 \| \nabla \mu \|^2 + C(1+\| \fhi\|^2) + C \| \fhi \|_{L^{6}(\Omega)}^4\\
   \label{Sq2}
&  \le \frac14 \| \nabla \mu \|^2 + \frac\kappa2 \| \fhi \|_{L^{6}(\Omega)}^6 + C(1+\| \fhi\|^2).
\end{align}
Then, replacing \eqref{g:0x} and \eqref{Sq2} into \eqref{EI2}, we deduce
\begin{align}
\nonumber
 & \ddt E(\vp) + \Big(\frac12 - M\delta\Big) \| \nabla \mu\|^2 
  + M  \kappa_1 \|F'(\vp)\|_{L^1(\Omega)}
  + \Big( M\kappa - \frac\kappa2 \Big) \| \vp \|_{L^6(\Omega)}^6  + \| \uu\|^2\\
 \label{EI3}
  & \quad\quad
   \le C | \Omega | \bigg| \int_\Omega F'(\fhi) \bigg| + C (1 + \| \fhi \|^2 )
   + M C_\delta \| \nabla \fhi \|^2 + M C.
\end{align}
Now, taking first $M\ge \max\{2^{-1}+\kappa^{-1},(1+C|\Omega|)\kappa_1^{-1}\}$ and subsequently
$\delta \le (4M)^{-1}$, we readily deduce that
\begin{equation}
\label{E4}
 \ddt E(\vp) + \frac14 \| \nabla \mu\|^2 
  + \|F'(\vp)\|_{L^1(\Omega)}
  + \| \vp \|_{L^6(\Omega)}^6  + \| \uu\|^2
   \le C (1 + \| \fhi \|^2 )
   + C \| \nabla \fhi \|^2.
\end{equation}
Exploiting once again the generalized Poincar\'{e} inequality \eqref{poincare} and the total mass bound \eqref{mass2} to estimate the first term on the right hand side, we eventually find the differential inequality
\begin{equation}
\label{EI5}
 \ddt E(\vp) + \frac14 \| \nabla \mu\|^2 
  + \|F'(\vp)\|_{L^1(\Omega)}
  + \frac12\| \vp \|_{L^6(\Omega)}^6  + \| \uu\|^2
 \le  C \big( 1 + \| \nabla \fhi \|^2 \big).
\end{equation}
It is worth noting that the above relation is independent on 
any eventual approximation parameter provided that $S$ is extended as in \eqref{S-ext} and $F$ is approximated in such a way that \eqref{superquad} is satisfied uniformly with respect to the approximation parameters. An application of the Gronwall lemma entails that
\begin{equation}
\label{g:01d}
  \vp \in L^\infty(0,T;H^1(\Omega)), \quad \nabla\mu\in L^2(0,T;L^2(\Omega)),
  \quad \uu\in L^2(0,T;L^2(\Omega)).
\end{equation}
Let us now go back to \eqref{g:01}. Using \eqref{g:01c} and estimating the right hand side without using Young's inequality as was done in \eqref{g:01b}, we easily get
\begin{equation}
\label{g:01e}
  2\kappa_1 \| F'(\fhi) \|_{L^1(\Omega)} 
   \le C + C \| \nabla \fhi \|^2 + C \| \nabla \fhi \| \| \nabla \mu \|.
\end{equation}
Thus, using \eqref{g:01d} and integrating in time, we deduce that
$$
  F'(\fhi) \in L^2(0,T;L^1(\Omega)).
$$
Moreover, on account of  \eqref{g:02b}, we have from \eqref{g:01e}
\begin{equation}
\label{g:01f}
  \| \mu \|_{H^1(\Omega)} \le C ( 1 + \| \nabla \mu \| ),
\end{equation}
whence, recalling \eqref{g:01d},
$$
  \mu \in L^2(0,T;H^1(\Omega)).
$$
Also, owing to \eqref{Sq1}, it is not difficult to obtain
$$
  q \in L^2(0,T;W^{1,\frac32}(\Omega)).
$$
Since $F$ is convex, arguing as in \cite[Section 3.2]{GGW2018} and in \cite[Proof of Theorem 5.1, Step 5]{G2020}, we find
\begin{equation}
\label{g:15}
\vp \in L^4(0,T;H^2(\Omega))\cap L^2(0,T;W^{2,p}(\Omega)),
\end{equation}
where $p=6$ if $d=3$ and any finite $p$ if $d=2$. 
Finally, we infer from \eqref{CHHS}$_3$ and \eqref{Ad3} that
\begin{align*}
\| \partial_t \vp\|_{\ast}
&\leq \| \nabla \mu\|+ \| \uu\| \| \fhi\|_{L^\infty(\Omega)}+ C \| S\|\\
&\leq \| \nabla \mu\|+ \| \uu\| \| \fhi\|_{H^1(\Omega)}^\frac12 \|\fhi \|_{H^2(\Omega)}^\frac12+ C \| S\|.
\end{align*}
In light of \eqref{g:01d}, \eqref{g:01f} and \eqref{g:15}, it follows that (if $d=3$)
$$
\partial_t \vp \in L^\frac85(0,T; (H^1(\Omega))').
$$
On the other hand, in the two dimensional case $d=2$, by exploiting the Brezis--Gallouet--Wainger inequality (cf. \eqref{BW2} below),
it is possible to show that $\partial_t \vp \in L^q(0,T; (H^1(\Omega))')$ for any $q \in [1,2)$.
\medskip

\noindent
\textbf{Higher Order Estimates in Two Dimensions}.
In this section we show the crucial global-in-time higher order estimates of the solution to system \eqref{CHHS}-\eqref{bc}. In turn, these bounds will imply the existence of global strong solutions in two dimensions.
As previously mentioned, we assume that the logarithmic potential is approximated by a sequence of polynomial functions as in the previous part Energy Estimates - Part II. In particular, we will not make use of the bound
 $\| \vp\|_{L^\infty(\Omega \times (0,T))}\leq 1$ since is not at our disposal in the approximation scheme\footnote{We also point out that we do not make use either of the regularity $\vp \in L^2(0,T;H^3(\Omega))$, 
which is available for weak solutions to \eqref{CHHS} with polynomial  potential (cf. \cite[Section 4.2]{G2020}). However, this regularity property does not uniformly hold in the approximation procedure.}.
\smallskip

\noindent
Let us start with the {\it a priori} bounds resulting from the energy balance \eqref{energybalance}
\begin{equation}
\label{EE}
\|\vp\|_{L^\infty(0,T;H^1(\Omega))}\leq C, \quad \|\mu\|_{L^2(0,T;H^1(\Omega))}\leq C,\quad \|\uu\|_{L^2(0,T;L^2(\Omega))}\leq C.
\end{equation}
As an immediate consequence, since $S= -m \vp + \tilde{h}(\vp)$, it is easily seen from \eqref{EE} that 
\begin{equation}
\label{S-H1}
\|S \|_{L^\infty(0,T;H^1(\Omega))}\leq C.
\end{equation}
Recalling the estimate (cf. \eqref{g:01f})
\begin{equation}
\label{muH1}
\| \mu\|_{H^1(\Omega)}\leq C(1+\| \nabla \mu\|),
\end{equation}
we have from \cite[Theorem 5.1]{G2020} (cf. \cite[Lemmas 7.3 and 7.4]{GGW2018}) that
\begin{equation}
\label{H2}
\| \vp\|_{H^2(\Omega)}^2 \leq C(1+ \| \nabla \mu\|)
\end{equation}
and 
\begin{equation}
\label{W2p}
\| \vp\|_{W^{2,p}(\Omega)}\leq C(1+\| \nabla \mu\|),
\end{equation}
for any $2\leq p<\infty$. 
Next, in order to control
 the $L^\infty$-norm of $\vp$, we will make use of the Brezis--Gallouet--Wainger inequality \eqref{BWd2} in the following form
\begin{equation}
\label{BW2}
\| f\|_{L^\infty(\Omega)}\leq C \| f\|_{H^1(\Omega)} \log^\frac12 \big( e+ \| f\|_{W^{1,r}(\Omega)}\big)+ C, 
\end{equation}
where $r>2$ and $e = \exp(1)$.  We are now in position to estimate $\partial_t \vp$. By using the boundary conditions \eqref{bc}, the estimates \eqref{S-H1} and \eqref{BW2}, we obtain
\begin{align*}
\| \partial_t \vp\|_{*}&\leq \| \nabla \mu\|+ \| \uu \vp\| +  C \|S\| \\
&\leq \| \nabla \mu\|+ \| \uu\| \|\vp\|_{L^\infty(\Omega)} + C\\
&\leq \| \nabla \mu\|+ C\| \uu\| \| \vp\|_{H^1(\Omega)} \log^\frac12 \big( e+ \| \vp\|_{W^{1,q}(\Omega)}\big)+ C\| \uu\|+ C, 
\end{align*}
for some $q>2$.
By using \eqref{EE} and \eqref{W2p}, we deduce that
\begin{equation}
\label{vpt}
\| \partial_t \vp\|_{*} \leq C (1+\| \nabla \mu\|+ \| \uu\|)\log^{\frac12} \big(e+\| \nabla \mu\|\big).
\end{equation}
Similarly, for the pressure we have
\begin{align*}
\| \nabla q \|
&\leq C \| S\| + \| \vp \nabla \mu\|\\
&\leq C+ \| \vp\|_{L^\infty(\Omega)} \|\nabla \mu\| \\
&\leq C+ C\| \nabla \mu\| \| \vp\|_{H^1(\Omega)} \log^\frac12 \big( e+ \| \vp\|_{W^{1,q}(\Omega)} \big),
\end{align*}
for some $q>2$.
Then, we find
\begin{equation}
\label{qH1}
\| \nabla q\|
\leq C(1+ \| \nabla \mu\|) \log^\frac12 \big( e+ \| \nabla \mu\| \big).
\end{equation}
In addition, we have
\begin{align*}
\| q\|&\leq  \|(-\Delta)^{-1} S\|+ \| (-\Delta)^{-1} \div(\vp \nabla \mu)\|\\
&\leq C \| S\| +C \| \vp \nabla \mu\|_{L^{\frac{6}{5}}(\Omega)}\\
&\leq C \| S\|+C \| \vp\|_{L^3(\Omega)} \| \nabla \mu\|,
\end{align*}
where $-\Delta$ denotes the Laplacian operator with Dirichlet boundary condition. 
By using \eqref{EE} and \eqref{S-H1}, we get
\begin{equation}
\label{qL2}
\| q\| \leq C(1+  \| \nabla \mu\|).
\end{equation}
Also, it easily follows from \eqref{S-H1}, \eqref{muH1} and \eqref{qL2} that
\begin{equation}
\label{Smu}
\Big| \int_{\Omega} S \mu \, \d x \Big| \leq C(1+\| \nabla \mu\|), 
\end{equation}
and
\begin{equation}
\label{Sq}
\Big| \int_{\Omega} S q \, \d x \Big| \leq C(1+  \| \nabla \mu\|).
\end{equation}

We now differentiate in time \eqref{CHHS}$_1$, multiply it by $\uu$ and integrate over $\Omega$. We obtain
\begin{equation}
\label{2nd}
\frac12 \ddt \| \uu\|^2 + (\nabla \partial_t q, \uu)= -\int_{\Omega} \partial_t \vp \nabla \mu \cdot \uu \, \d x - \int_{\Omega} \vp \nabla \partial_t \mu \cdot \uu \, \d x.
\end{equation}
We observe that the second term on the left-hand side can be rewritten as 
$$
(\nabla \partial_t q, \uu)= -(\partial_t q, \div \uu)=-(\partial_t q, S)=
- \ddt \Big[(q,S)\Big] + (q,\partial_t S).
$$
Here we have used that $q=0$ on $\partial \Omega$.
Combining the two equations above, we have 
\begin{equation}
\label{test1}
\ddt \bigg[ \frac12 \| \uu\|^2 -(q,S) \bigg] 
= -\int_{\Omega} \partial_t \vp \nabla \mu \cdot \uu \, \d x - \int_{\Omega} \vp \nabla \partial_t \mu \cdot \uu \, \d x- (q, \partial_t S).
\end{equation}
Next, multiplying \eqref{CHHS}$_3$ by $\partial_t \mu$, we obtain
$$
(\partial_t \vp, \partial_t \mu)- (\Delta \mu, \partial_t \mu)= -(\div(\uu \vp), \partial_t \mu ) +(S, \partial_t \mu). 
$$
Exploiting the boundary conditions \eqref{bc}, we then have
\begin{equation}
\label{g:11}
(\partial_t \vp, \partial_t \mu) + \frac12 \ddt \| \nabla \mu\|^2 
= \int_{\Omega}\vp \uu \cdot \nabla \partial_t \mu \, \d x + (S, \partial_t \mu). 
\end{equation}
By using the expression \eqref{CHHS}$_4$ for 
$\mu$ and the boundary conditions \eqref{bc}, we get
$$
(\partial_t \vp, \partial_t \mu) = 
\|\nabla \partial_t \vp\|^2 + \int_{\Omega} F''(\vp) |\partial_t \vp |^2 \, \d x- \theta_0 \int_{\Omega} |\partial_t \vp|^2 \, \d x.
$$
Moreover, we rewrite the last term on the right-hand side of \eqref{g:11} as 
$$
(S, \partial_t \mu)= \ddt\Big[(S,\mu)\Big]-
(\partial_t S, \mu).
$$
Then, combining the above relations, we find
\begin{align}
\ddt \bigg[ \frac12 \| \nabla \mu\|^2 &- (S,\mu)\bigg] + \|\nabla \partial_t \vp\|^2 + \int_{\Omega} F''(\vp) |\partial_t \vp |^2 \, \d x \notag \\
&= \theta_0 \int_{\Omega} |\partial_t \vp|^2 \, \d x- (\partial_t S, \mu) +\int_{\Omega}\vp \uu \cdot \nabla \partial_t \mu \, \d x. 
\label{test2}
\end{align}
Adding \eqref{test1} and \eqref{test2}, we obtain
\begin{align}
\ddt \bigg[ \frac12 \| \nabla \mu\|^2 &+ \frac12 \| \uu\|^2 - (S,\mu)- (S,q) \bigg] +
 \|\nabla \partial_t \vp\|^2 + \int_{\Omega} F''(\vp) |\partial_t \vp |^2 \, \d x \notag \\
&=  \theta_0 \int_{\Omega} |\partial_t \vp|^2 \, \d x- (\partial_t S, \mu)- (\partial_t S, q)-
\int_{\Omega} \partial_t \vp \nabla \mu \cdot \uu \, \d x
\notag \\
&=: I_1+I_2+I_3+I_4.
\label{test3}
\end{align}
Before proceeding to estimate the right-hand side in \eqref{test3}, we define 
\begin{equation}
\label{defiH}
H=  \frac12 \| \nabla \mu\|^2 + \frac12 \| \uu\|^2 - (S,\mu)- (S,q),
\end{equation}
and we observe that, in light of  \eqref{EE}, \eqref{Smu}, \eqref{Sq}, we have
\begin{equation}
\label{H}
\frac14 \| \nabla \mu\|^2 + \frac14 \| \uu\|^2-C \leq H \leq C (1+ \| \nabla \mu\|^2+ \| \uu\|^2 ).
\end{equation}
Now, since
\begin{equation}
\label{mean-est}
| \overline{v} |= \Big| \frac{1}{|\Omega|} \l v,1\r \Big| 
 \leq C \|v \|_{\ast}, 
\end{equation}
by the generalized Poincar\'{e}'s inequality \eqref{poincare}, we notice that
$$
  \| \partial_t \vp\|_{H^1(\Omega)} 
   \leq C ( \| \nabla \partial_t \vp\|+ |\overline{\partial_t \vp}|) 
   \leq C ( \| \nabla \partial_t \vp\|+ \|\partial_t \vp \|_{\ast}).
$$
Thus, we rewrite \eqref{test3} as follows
\begin{equation}
\label{test4}
\ddt H + \eta \|\partial_t \vp \|_{H^1(\Omega)}^2 \leq C \|\partial_t \vp \|_{\ast}^2 + I_1+I_2+I_3+I_4,
\end{equation}
for some positive $\eta$.
By duality, we have 
\begin{equation*}
I_1=\theta_0 \int_{\Omega} |\partial_t \vp|^2 \, \d x\leq \theta_0 \| \partial_t \vp\|_{H^1(\Omega)} \| \partial_t \vp\|_{\ast} \leq \frac{\eta}{8} \|  \partial_t \vp\|_{H^1(\Omega)}^2+ C \|\partial_t \vp \|_{\ast}^2.
\end{equation*}
Hence, by \eqref{vpt} and \eqref{H}, we find
\begin{equation}
\label{I1}
I_1 \leq \frac{\eta}{8} \|  \partial_t \vp\|_{H^1(\Omega)}^2+ C(C+ H)\log(C+ H).
\end{equation}
By definition, $\partial_t S= -m \partial_t \vp+ \tilde{h}'(\vp)\partial_t \vp$. Owing to \eqref{qL2}, this entails 
\begin{align*}
I_2+I_3&\leq C \| \partial_t \vp\| (\| \mu\|+ \| q\|) \\
&\leq 
C \| \partial_t \vp\|_{H^1(\Omega)}^\frac12 \| \partial_t \vp\|_{\ast}^\frac12 (1+\| \nabla \mu\|)\\
&\leq \frac{\eta}{4}  \|\partial_t \vp \|_{H^1(\Omega)}^2 + C \| \partial_t \vp\|_{\ast}^\frac23 (1+\| \nabla \mu\|)^\frac43.
\end{align*}
In light of \eqref{vpt} and \eqref{H}, we obtain
\begin{equation}
\label{I2-3}
I_2+I_3 \leq \frac{\eta}{4} \|\partial_t \vp \|_{H^1(\Omega)}^2 + C(C+H)\log^\frac13(C+H).
\end{equation}
The main task is now to estimate $I_4$. To do so, we exploit the elliptic structure of the system \eqref{CHHS}-\eqref{bc} to derive $H^2$ estimates for the pressure and the chemical potential. 
First, the regularity theory of the Laplace problem with Dirichlet boundary condition applied to \eqref{PP2} entails that
\begin{align*}
\| q\|_{H^2(\Omega)} \leq C \Big\| \vp \nabla \vp \cdot \nabla q\Big\|
+ C \|(1-\vp^2) \nabla \mu \cdot \nabla \vp\|
+ C \| \vp \partial_t \vp \|+ C \|S (1+\vp^2-\vp)\|.
\end{align*}
By using \eqref{EE} and \eqref{S-H1}, we find
\begin{align}
\| q\|_{H^2(\Omega)}
&\leq C \| \vp\|_{L^\infty(\Omega)} \| \nabla \vp\|_{L^\infty(\Omega)} \|\nabla q \|+ C (1+\| \vp\|_{L^\infty(\Omega)}^2)\| \nabla \vp\|_{L^\infty(\Omega)} 
\| \nabla \mu\|\notag \\
&\quad + C \| \vp\|_{L^\infty(\Omega)} \|\partial_t \vp \| + C.
\label{qH2}
\end{align}
Now, recalling the problem \eqref{muP}, using first the regularity theory of the Laplace equation with Neumann boundary condition
and the trace theorem for normal derivatives, we obtain 
\begin{align*}
 \| \mu\|_{H^2(\Omega)}
 & \leq C|\overline{\mu}|+ C \| \Delta \mu\|
   + C \Big\| - \nabla \Big( \frac{\vp}{1+\vp^2} q\Big)\cdot \n \Big\|_{H^{\frac12}(\partial \Omega)}\\
 & \leq C|\overline{\mu}| + C \| \Delta \mu\|
   + C \Big\| \frac{\vp}{1+\vp^2} q \Big\|_{H^2(\Omega)}. 
\end{align*}
By exploiting the equation \eqref{CHHS}$_2$ and the estimates \eqref{EE}, \eqref{S-H1} and \eqref{muH1}, we deduce that
\begin{align}\nonumber
  \| \mu\|_{H^2(\Omega)}&
    \leq  C(1+\| \nabla \mu\|)+ C \| S\|+ C\| \partial_t \vp\|+C \| \div(\uu \vp)\|
     +C \Big\| \frac{\vp}{1+\vp^2} q\Big\|_{H^2(\Omega)}\\
 \label{g:12}
  & \leq C(1+\| \nabla \mu\|)+ C\| \partial_t \vp\|+ C \| \uu \cdot \nabla \vp\|
   + C \Big\| \frac{\vp}{1+\vp^2} q\Big\|_{H^2(\Omega)}.
\end{align}
In order to provide a control of the last term on the right-hand side, we recall the elliptic problem \eqref{phiqP}.
By the regularity theory of the Laplace problem with Dirichlet boundary condition, we get
\begin{align}
\notag
\Big\|\frac{\vp}{1+\vp^2} q\Big\|_{H^2(\Omega)}
&\leq C\Big\| \Delta \Big( \frac{\vp}{1+\vp^2} q\Big)\Big\|\\
\notag
&\leq C \Big\|\vp \frac{1-\vp^2}{1+\vp^2} \nabla \vp \cdot \nabla \mu \Big\| + C \Big\| \Big( \frac{1+\vp^2+(1-\vp^2)^2}{(1+\vp^2)^2} \Big) \nabla \vp\cdot \nabla q\Big\|\\
&\quad + C \Big\| \frac{\vp^2}{1+\vp^2}(\partial_t \vp-S)\Big\|+ C \|\vp S\| + C \Big\| \frac{\vp}{1+\vp^2}\Big\|_{H^2(\Omega)} \| q\|_{L^\infty(\Omega)}.
\label{ell:q2}
\end{align}
We notice that 
$$
 \Big\| \frac{1-\vp^2}{1+\vp^2} \Big\|_{L^\infty(\Omega)}\leq C, \quad 
\Big\|  \frac{1+\vp^2+(1-\vp^2)^2}{(1+\vp^2)^2} \Big\|_{L^\infty(\Omega)}\leq C.
$$
In addition, let us define the function $g:\mathbb{R}\rightarrow \mathbb{R}$ as $g(x)= \frac{x}{1+x^2}$. It is clear that $|g(x)|\leq 1$ and its derivative $g'(x)=\frac{1-x^2}{(1+x^2)^2}$ and $g''(x)= \frac{2x^3-6x}{(1+x^2)^3}$ 
are such that $|g'(x)|\leq 1$ and $|g''(x)|\leq 2$. Then, by using \eqref{L} and \eqref{EE}, we find
\begin{align}
\Big\| \frac{\vp}{1+\vp^2}\Big\|_{H^2(\Omega)} &\leq 
C \Big\| \frac{\vp}{1+\vp^2}\Big\|_{L^2(\Omega)}
+ C \Big( \sum_{i,j=1}^2 \int_{\Omega} \big( \partial_{x_i} \partial_{x_j} \frac{\vp}{1+\vp^2} \big)^2 \, \d x \Big)^\frac12 \notag \\
&\leq C + C \Big( \sum_{i,j=1}^2 \int_{\Omega} \big(g'(\vp) \partial_{x_i} \partial_{x_j} \vp+ g''(\vp) \partial_{x_i}\vp \partial_{x_j}\vp\big)^2  \, \d x \Big)^\frac12 \notag \\
&\leq C \big( 1+ \| \nabla \vp\|_{L^4(\Omega)}^2+ \|\vp\|_{H^2(\Omega)} \big) \notag \\
&\leq C\big(1+ \| \vp\|_{H^2(\Omega)}\big).
\label{gfhi}
\end{align}
Thus, combining the above estimates, we arrive at
\begin{align}
\nonumber
\Big\|\frac{\vp}{1+\vp^2} q\Big\|_{H^2(\Omega)}
&\leq C \| \vp\|_{L^\infty(\Omega)} \| \nabla \vp\|_{L^\infty(\Omega)} \|\nabla \mu \|
+ C \| \nabla \vp\|_{L^\infty(\Omega)} \| \nabla q\|+
C \| \partial_t \vp\| \\
&\quad + C(1+\|\vp \|_{H^2(\Omega)}) \| q\|_{L^\infty(\Omega)}+C.
\label{phiqH2}
\end{align}
Then, going back to \eqref{g:12}, it is easy to deduce that
\begin{align}
\nonumber
  \| \mu\|_{H^2(\Omega)}
  & \leq C(1+\| \nabla \mu\|) + C\| \partial_t \vp\|
   + C \| \uu \| \|\nabla \vp \|_{L^\infty(\Omega)}
   + C(1+\|\vp \|_{H^2(\Omega)}) \| q\|_{L^\infty(\Omega)}\\
\label{muH2}
 & \quad 
   + C \| \vp\|_{L^\infty(\Omega)} \| \nabla \vp\|_{L^\infty(\Omega)} \|\nabla \mu \| 
   + C \| \nabla \vp\|_{L^\infty(\Omega)} \| \nabla q\|.
\end{align}

We are now in position to estimate $I_4$ in \eqref{test3}. By \eqref{L} 
and \eqref{muH2}, we have
\begin{align*}
I_4&= -\int_{\Omega} \partial_t \vp \nabla \mu \cdot \uu \, \d x
\leq \| \partial_t \vp \|_{L^4(\Omega)} \| \nabla \mu\|_{L^4(\Omega)} \| \uu\|\\
&\leq C \|\partial_t \vp \|^\frac12 \| \partial_t \vp\|_{H^1(\Omega)}^\frac12
\| \nabla \mu\|^\frac12 \| \mu\|_{H^2(\Omega)}^\frac12 \| \uu\|\\
&\leq C \| \partial_t \vp\|_{\ast}^\frac14 
\| \partial_t \vp\|_{H^1(\Omega)}^\frac34  
\| \nabla \mu\|^\frac12 \| \uu\| \\
& \quad \times 
\big( 1+ \| \nabla \mu\|^\frac12 + \| \partial_t \vp\|^\frac12+
\| \uu\|^\frac12  \|\nabla \vp\|_{L^\infty(\Omega)}^\frac12+
(1+\| \vp\|_{H^2(\Omega)}^\frac12) \| q\|_{L^\infty(\Omega)}^\frac12\\
& \quad\quad +\| \vp\|_{L^\infty(\Omega)}^\frac12 \| \nabla \vp\|_{L^\infty(\Omega)}^\frac12 \|\nabla \mu \|^\frac12
 + \| \nabla \vp\|_{L^\infty(\Omega)}^\frac12 \| \nabla q\|^\frac12 \big)\\
&=:  I_{41}+I_{42}+I_{43}+I_{44}+I_{45} +I_{46} +I_{47}.
\end{align*}
By using \eqref{H2}, \eqref{vpt}, \eqref{H} and Young's inequality, we have
\begin{align*}
I_{41}&= C \| \partial_t \vp\|_{\ast}^\frac14 
\| \partial_t \vp\|_{H^1(\Omega)}^\frac34  
\| \nabla \mu\|^\frac12 \| \uu\|\\
&\leq \frac{\eta}{56}\| \partial_t \vp\|_{H^1(\Omega)}^2
+C \|\partial_t \vp \|_{\ast}^\frac25 \|\nabla \mu \|^\frac45 \| \uu\|^\frac85\\
&\leq \frac{\eta}{56}\| \partial_t \vp\|_{H^1(\Omega)}^2
+ C (C+ H)^\frac15 \log^\frac15 (C+ H)(C+H)^\frac25(C+H)^\frac45\\
&\leq \frac{\eta}{56}\| \partial_t \vp\|_{H^1(\Omega)}^2+
C (C+ H)^\frac75 \log^\frac15 (C+ H),
\end{align*}
\begin{align*}
I_{42}&= C \| \partial_t \vp\|_{\ast}^\frac14 
\| \partial_t \vp\|_{H^1(\Omega)}^\frac34  
\| \nabla \mu\| \| \uu\|\\
&\leq \frac{\eta}{56}\| \partial_t \vp\|_{H^1(\Omega)}^2
+C \|\partial_t \vp \|_{\ast}^\frac25 \|\nabla \mu \|^\frac85 \| \uu\|^\frac85\\
&\leq \frac{\eta}{56}\| \partial_t \vp\|_{H^1(\Omega)}^2
+ C (C+ H)^\frac15 \log^\frac15 (C+ H)(C+H)^\frac45(C+H)^\frac45\\
&\leq \frac{\eta}{56}\| \partial_t \vp\|_{H^1(\Omega)}^2+
C (C+ H)^\frac95 \log^\frac15 (C+ H),
\end{align*}
and
\begin{align*}
I_{43}&= C \| \partial_t \vp\|_{\ast}^\frac14 
\| \partial_t \vp\|_{H^1(\Omega)}^\frac34  
\| \nabla \mu\|^\frac12 \| \uu\| \| \partial_t \vp\|^\frac12\\
&\leq C \| \partial_t \vp\|_{\ast}^\frac12 
\| \partial_t \vp\|_{H^1(\Omega)}  
\| \nabla \mu\|^\frac12 \| \uu\|\\
&\leq \frac{\eta}{56} \|\partial_t \vp \|_{H^1(\Omega)}^2
+C \| \partial_t \vp\|_{\ast} 
\| \nabla \mu\| \| \uu\|^2\\
&\leq \frac{\eta}{56} \|\partial_t \vp \|_{H^1(\Omega)}^2
+C(C+H)^\frac12 \log^\frac12(C+H) (C+H)^\frac12 (C+H)\\
&\leq \frac{\eta}{56} \|\partial_t \vp \|_{H^1(\Omega)}^2
+C(1+H)^2 \log^\frac12(C+H).
\end{align*}
Exploiting \eqref{H2}, \eqref{W2p}, \eqref{BW2}, \eqref{vpt}, \eqref{H} 
and Young's inequality again, we obtain
\begin{align*}
I_{44}&= C \| \partial_t \vp\|_{\ast}^\frac14 
\| \partial_t \vp\|_{H^1(\Omega)}^\frac34  
\| \nabla \mu\|^\frac12 \| \uu\|^\frac32  \| \nabla \vp\|_{L^\infty(\Omega)}^\frac12\\
&\leq \frac{\eta}{56} \| \partial_t \vp\|_{H^1(\Omega)}^2
+C \| \partial_t \vp\|_{\ast}^\frac25 \| \nabla \mu\|^\frac45 \|\uu \|^\frac{12}{5} \| \nabla \vp\|_{L^\infty(\Omega)}^\frac45\\
&\leq \frac{\eta}{56} \| \partial_t \vp\|_{H^1(\Omega)}^2
+C (C+H)^\frac15 \log^\frac15 (C+H) (C+H)^\frac85 
\Big( C\| \vp\|_{H^2(\Omega)} \log^\frac12(e+\| \vp\|_{W^{2,3}(\Omega)})+C\Big)^\frac45\\
&\leq \frac{\eta}{56} \| \partial_t \vp\|_{H^1(\Omega)}^2
+C (C+H)^\frac95 \log^\frac15(C+H) 
\Big( C (1+\|\nabla \mu \|)^\frac12  \log^\frac12(e+C(1+\| \nabla \mu\|))+ C\Big)^\frac45\\
&\leq \frac{\eta}{56} \| \partial_t \vp\|_{H^1(\Omega)}^2
+C (C+H)^\frac95 \log^\frac15(C+H) 
\Big( (C+H)^\frac14  \log^\frac12(C+H)\Big)^\frac45\\
&\leq \frac{\eta}{56} \| \partial_t \vp\|_{H^1(\Omega)}^2
+C (C+H)^2 \log^\frac35(C+H),
\end{align*}
whereas, using also \eqref{qH1}, we deduce
\begin{align}
I_{45}&= C \| \partial_t \vp\|_{\ast}^\frac14 
\| \partial_t \vp\|_{H^1(\Omega)}^\frac34  
\| \nabla \mu\|^\frac12 \| \uu\|
 \big( 1 + \| \vp\|_{H^2(\Omega)}^\frac12 \big)
\| q\|_{L^\infty(\Omega)}^\frac12 \notag \\
&\leq  \frac{\eta}{112} \| \partial_t \vp\|_{H^1(\Omega)}^2+
C \| \partial_t \vp\|_{\ast}^\frac25 \| \nabla \mu\|^\frac45 \|\uu \|^\frac{8}{5}(1+\| \nabla \mu\|)^\frac25 
\Big( \|q\|_{H^1(\Omega)}\log^\frac12(e+ \|q\|_{H^2(\Omega)}) + C \Big)^\frac45\notag\\
&\leq \frac{\eta}{112} \| \partial_t \vp\|_{H^1(\Omega)}^2+
C (C+H)^\frac15 \log^\frac15(C+H) (C+H)^\frac{7}{5}\notag\\
&\quad \times
\Big( (1+\| \nabla \mu\|)\log^\frac12(C+H) \log^\frac12(e+\| q\|_{H^2(\Omega)})+C\Big)^\frac45\notag\\
&\leq \frac{\eta}{112} \| \partial_t \vp\|_{H^1(\Omega)}^2+
C (C+H)^\frac85 \log^\frac15(C+H)
\Big( (C+H)^\frac12\log^\frac12(C+H) \log^\frac12(e+\| q\|_{H^2(\Omega)})\Big)^\frac45\notag\\
&\leq \frac{\eta}{112} \| \partial_t \vp\|_{H^1(\Omega)}^2+
C (C+H)^2 \log^\frac35(C+H) \log^\frac25(e+\| q\|_{H^2(\Omega)}).
\label{I44-1}
\end{align}
To control the last term, it is sufficient to perform a rough estimate (in terms of exponents)
since this is just a term in the logarithmic correction. We first observe that,
by \eqref{Ad2}, \eqref{EE} and \eqref{H2},
\begin{align*}
\| \partial_t \vp\|_{\ast}
&\leq C (1+\| \nabla \mu\|+\| \uu\| \|\vp \|_{L^\infty(\Omega)})\\
&\leq  C (1+\| \nabla \mu\|+\| \uu\| \|\vp \|_{H^2(\Omega)}^\frac12)\\
&\leq C (1+\| \nabla \mu\|+ \| \uu \| +\| \uu\| \|\nabla \mu \|^\frac14 ).
\end{align*} 
Then, going back to \eqref{qH2} and using the estimates \eqref{W2p} and \eqref{qH1}, it is not difficult to arrive at 
\begin{align}
\| q\|_{H^2(\Omega)}
&\leq C \| \vp\|_{W^{1,3}(\Omega)}\| \vp\|_{W^{2,3}(\Omega)} \| \nabla q\|+
C(1+\| \vp\|_{W^{1,3}(\Omega)})^2 \|\vp \|_{W^{2,3}(\Omega)} \| \nabla \mu\| \notag \\
&\quad + C \| \vp\|_{H^2(\Omega)} 
\| \partial_t \vp\|_{\ast}^\frac12 \| \partial_t \vp\|_{H^1(\Omega)}^\frac12 +C \notag \\
&\leq C(1+\| \nabla \mu\|)^4+ 
C(1+\|\nabla \mu\|+\| \uu\|)^2 \| \partial_t \vp\|_{H^1(\Omega)}^\frac12 \notag \\
&\leq C(C+H)^2+ C(C+H)\| \partial_t \vp\|_{H^1(\Omega)}^\frac12.
\label{qH2-2}
\end{align}
Combining \eqref{I44-1} with \eqref{qH2-2}, we find (here $C$ also changes from line to line to adjust exponent in the logarithm)
\begin{align*}
I_{45}& \leq \frac{\eta}{112} \| \partial_t \vp\|_{H^1(\Omega)}^2+
C (C+H)^2 \log^\frac35 (C+H) \log^\frac25 \Big(e+(C+H)^2+C(C+H)\| \partial_t \vp\|_{H^1(\Omega)}^\frac12 \Big)\\
&\leq \frac{\eta}{112} \| \partial_t \vp\|_{H^1(\Omega)}^2+
C (C+H)^2  \log \Big( C\big(e+(C+H)^2+ \| \partial_t \vp\|_{H^1(\Omega)}\big) \Big)\\
&\leq \frac{\eta}{112} \| \partial_t \vp\|_{H^1(\Omega)}^2+
C (1+H)^2  \log \Big(  (C+H)^2 \times \big(e+ \| \partial_t \vp\|_{H^1(\Omega)}\big) \Big)\\
&\leq \frac{\eta}{112} \| \partial_t \vp\|_{H^1(\Omega)}^2+
C (C+H)^2  \log \Big(  (C+ H)^2 \Big) + 
C(C+H)^2 \log \Big( e+ \|\partial_t \vp \|_{H^1(\Omega)} \Big) \\
&\leq \frac{\eta}{112} \| \partial_t \vp\|_{H^1(\Omega)}^2+
C (C+H)^2  \log (  C+ H ) + 
C(C+H)^2 \log \Big( e+ \|\partial_t \vp \|_{H^1(\Omega)} \Big).
\end{align*}
In order to handle the last term on the right-hand side above, we recall the following basic inequality 
(see also \cite[pag.~115]{FMT} for a similar inequality)
$$
x^2\log(e+y)\leq \varepsilon (e+y)^2 + x^2\log \Big(\frac{x}{\sqrt{2\varepsilon}}\Big), \quad \forall \, \varepsilon>0, x>0,y>0.
$$
Using this estimate with $\varepsilon=\frac{\eta}{224}$, $x=C+H$ and $y=\|\partial_t \vp \|_{H^1(\Omega)}$, we arrive at 
\begin{align*}
I_{45}& \leq \frac{\eta}{112} \| \partial_t \vp\|_{H^1(\Omega)}^2+ \frac{\eta}{224}(e+ \| \partial_t \vp\|_{H^1(\Omega)})^2+
C(C+H)^2\log\Big(e+ \frac{C+H}{\sqrt{\eta/112}}\Big)\\
&\leq \frac{\eta}{56} \| \partial_t \vp\|_{H^1(\Omega)}^2+
C(C+H)^2\log(C+ H).
\end{align*}
Next, we recall that the last terms we need to control are 
\begin{align*}
& I_{46}+I_{47} = C \| \partial_t \vp\|_{\ast}^\frac14 
\| \partial_t \vp\|_{H^1(\Omega)}^\frac34  
\| \nabla \mu\|^\frac12 \| \uu\| \\
& \quad\quad\quad\quad
\times\Big( \| \vp\|_{L^\infty(\Omega)} \| \nabla \vp\|_{L^\infty(\Omega)} \|\nabla \mu \|
+ \| \nabla \vp\|_{L^\infty(\Omega)} \| \nabla q\|\Big)^\frac12.
\end{align*}
Then, by \eqref{EE}, \eqref{BW2}, \eqref{vpt} and \eqref{qH1} we have
\begin{align*}
 I_{46}+I_{47}& \leq C (C+H)^\frac18 \log^{\frac18} (C+H) 
\| \partial_t \vp\|_{H^1(\Omega)}^\frac34 (C+H)^\frac34 \\
& \quad\quad \times \Big( \log^\frac12(C+H) \| \vp\|_{H^2(\Omega)}
\log^\frac12 (C+H) \| \nabla \mu\|\\ 
& \quad\quad\quad\quad
 + \| \vp\|_{H^2(\Omega)}\log^\frac12(C+H) 
 \big(1 + \|\nabla \mu \| \big)  \log^\frac12 (C+H) \Big)^\frac12 \\
&\leq C \| \partial_t \vp\|_{H^1(\Omega)}^\frac34 (C+H)^\frac78  \log^{\frac18} (C+H) 
   \Big( (C+H)^\frac34 \log(C+H) \Big)^\frac12\\
&\leq C\| \partial_t \vp\|_{H^1(\Omega)}^\frac34 (C+H)^\frac{5}{4}  \log^{\frac58} (C+H) \\
&\leq \frac{\eta}{28} \| \partial_t \vp\|_{H^1(\Omega)}^2+ C (C+H)^2\log(C+H).
\end{align*}
Now, going back to \eqref{test4} and collecting all the above estimates, we infer that
$$
\ddt H + \frac{\eta}{2} \|\partial_t \vp \|_{H^1(\Omega)}^2 \leq C(C+H)^2\log(C+H).
$$
Since $H\in L^1(0,T)$ (cf. \eqref{EE}), by applying Lemma \ref{GL2} we deduce the following double exponential estimate
$$
H(t)\leq (C+H(0))^{\mathrm{e}^{\int_0^t C (C+  H(s)) \, \d s}}, \quad \forall \, t \in [0,T].
$$
Recalling Assumption $A.3$ and the subsequent Remark~\ref{rem:H}, we notice that the value of $H$ is finite at the initial time. Hence, we obtain
the following bounds
$$
\| \nabla \mu \|_{L^\infty(0,T;L^2(\Omega))}+ \|\uu \|_{L^\infty(0,T;L^2(\Omega))}
 + \| \partial_t \vp\|_{L^2(0,T;H^1(\Omega))}
\leq C.
$$
Now, thanks to estimates \eqref{W2p}, \eqref{vpt} and \eqref{qH1}, we also infer that
$$
\| \vp \|_{L^\infty(0,T;W^{2,p}(\Omega))}+ \|\partial_t \vp\|_{L^\infty(0,T;(H^1(\Omega))')}
+ \| q\|_{L^\infty(0,T;H_0^1(\Omega))}
\leq C,
$$
for any $2\leq p<\infty$.
Thanks to \cite[Lemma 7.4]{GGW2018} (see also \cite[Theorem 2.2]{G2020}), we can deduce that 
\begin{equation}
\label{g:14}
\| F''(\vp)\|_{L^\infty(0,T;L^p(\Omega))}\leq C,
\end{equation}
for any $2\leq p<\infty$. 
Next, by \eqref{qH2-2} we now have
\begin{equation}
\label{g:13}
 \| q \|_{H^2(\Omega)} \leq C(1+ \| \partial_t \vp\|_{H^1(\Omega)}^{\frac12}),
\end{equation}
whence
$$
\| q \|_{L^4(0,T;H^2(\Omega))}\leq C.
$$
In addition to that, by \eqref{muH2} and \eqref{g:13}, we infer 
$$
 \| \mu\|_{H^2(\Omega)} \leq C(1+ \| \partial_t \vp\| + \| q \|_{L^\infty(\Omega)} )
 \leq C(1+ \| \partial_t \vp\|_{H^1(\Omega)}^{\frac12}), 
$$
which implies that 
$$
\| \mu\|_{L^4(0,T;H^2(\Omega))}\leq C.
$$
By exploiting \eqref{CHHS}$_1$, and the Sobolev embeddings,  we also deduce that 
\begin{align}
\| \uu\|_{H^1(\Omega)}& \leq \| q\|_{H^2(\Omega)}+ \| \vp \nabla \mu \|_{H^1(\Omega)} \notag \\
&\leq \| q\|_{H^2(\Omega)} + C \| \vp\|_{L^\infty(\Omega)} \| \mu \|_{H^2(\Omega)} + C \| \vp\|_{W^{1,\infty}(\Omega)} \| \nabla \mu\| \notag\\
&\leq C+ \| q\|_{H^2(\Omega)} + C \| \mu \|_{H^2(\Omega)}.
\label{uH1}
\end{align}
Thanks to the above regularity, we have
$$
\| \uu\|_{L^4(0,T;H^1(\Omega))}\leq C.
$$


\subsection{Uniqueness of Strong Solutions in Two Dimensions}

Let us consider a pair of strong solutions $(\uu_1,q_1,\vp_1)$ and $(\uu_2,q_2,\vp_2)$ 
originating from the same initial condition $\vp_0$. We define
$$
 \uu=\uu_1-\uu_2, \quad q=q_1-q_2, \quad \vp=\vp_1-\vp_2, \quad \mu=\mu_1-\mu_2,
$$
which solve
\begin{equation}
\label{diff-p}
\begin{cases}
 \uu + \nabla  q= -\vp_1 \nabla \mu- \vp \nabla \mu_2,\\
\div \uu= S,\\
\partial_t \vp + \div (\vp_1 \uu ) + \div ( \vp\uu_2 ) = \Delta  \mu + S,\\
\mu= -\Delta \vp+ \Psi'(\vp_1)-\Psi'(\vp_2),
\end{cases}
\end{equation}
where $ S= -m   \vp+ h(\vp_1)-h(\vp_2)$.
We first observe that 
\begin{equation}
\label{q-diff}
-\Delta q= S +\div(\vp_1 \nabla  \mu)+\div( \vp \nabla \mu_2).
\end{equation}
Multiplying \eqref{diff-p}$_1$ by $ \uu$, \eqref{diff-p}$_3$ by $ \mu$, \eqref{diff-p}$_4$ by $\partial_t  \vp$ and \eqref{q-diff} by $\varepsilon (-\Delta)^{-1}  q$ for some $\varepsilon \in (0,1)$ that will be chosen later, integrating over $\Omega$ and summing the resulting equations, we find
\begin{align*}
\ddt &\bigg[ \frac12 \| \nabla \vp\|^2 +\frac12 \int_{\Omega} L(\vp_1,\vp_2) | \vp|^2 \, \d x \bigg]+ \| \nabla \mu\|^2+ \|  \uu\|^2+ \varepsilon \| q\|^2 \\
&= \int_{\Omega}   \vp  \, \uu_2 \cdot  \nabla   \mu \, \d x + \int_{\Omega}  S \, \mu \, \d x+ \int_{\Omega}   S \,  q \, \d x - \int_{\Omega}  \vp \, \nabla \mu_2 \cdot   \uu \, \d x\\
&\quad + \int_{\Omega} \partial_t L(\vp_1,\vp_2) \frac{|  \vp|^2}{2} \,\d x+ \theta_0 \int_{\Omega}  \vp \partial_t  \vp \, \d x 
+ \varepsilon\int_{\Omega} S (-\Delta )^{-1} q \, \d x \\
&\quad - \varepsilon \int_{\Omega} \vp_1 \nabla \mu \cdot \nabla (-\Delta)^{-1}  q \, \d x- \varepsilon \int_{\Omega}  \vp \, \nabla \mu_2 \cdot \nabla (-\Delta)^{-1}  q \, \d x,
\end{align*}
where
$$
L(\vp_1,\vp_2)= \int_0^1 F''(\tau \vp_1 +(1-\tau )\vp_2) \, \d \tau \geq \theta>0.
$$
We now estimate all the terms on the right-hand side of the above equality. In the sequel, we will use the notation 
$$
Y= \frac12 \| \nabla \vp\|^2 +\frac12 \int_{\Omega} L(\vp_1,\vp_2) | \vp|^2 \, \d x,
$$
and we will repeatedly use that
$
\|\vp\|_{H^1(\Omega)}^2 \leq C Y 
$, for some positive constant $C$.
By using the Sobolev embeddings, we have
\begin{align*}
 \int_{\Omega}   \vp  \, \uu_2 \cdot  \nabla   \mu \, \d x 
& \leq \| \vp\|_{L^6(\Omega)} \| \uu_2 \|_{L^3(\Omega)} \| \nabla \mu\|\\
&\leq \frac{1}{6} \|\nabla \mu\|^2 +C \| \uu_2\|_{L^3(\Omega)}^2 Y,
\end{align*}
and
\begin{align*}
- \int_{\Omega}  \vp \, \nabla \mu_2 \cdot   \uu \, \d x &\leq \| \vp\|_{L^6(\Omega)} \| \nabla \mu_2\|_{L^3(\Omega)} \| \uu\|\\
&\leq \frac14 \| \uu\|^2 + C \| \nabla \mu_2\|_{L^3(\Omega)}^2 Y,
\end{align*}
By using \eqref{diff-p}$_4$, we obtain
\begin{align*}
 \int_{\Omega}  S \,  \mu \, \d x  &= \int_{\Omega} \big( -m \vp + h(\vp_1)-h(\vp_2) \big)\big(-\Delta \vp +F'(\vp_1)-F'(\vp_2)-\theta_0 \vp \big) 
\, \d x \\
&= \int_{\Omega} -m | \nabla \vp|^2 + \nabla (h(\vp_1)-h(\vp_2))\cdot \nabla \vp \, \d x \\
&\quad + \int_{\Omega}  \big( -m \vp + h(\vp_1)-h(\vp_2) \big) \big(F'(\vp_1)-F'(\vp_2)-\theta_0 \vp \big) 
\, \d x \\
&\leq C \| \nabla \vp\|^2+ C \| \vp\|^2 + C \| \vp\| \big( 1 +  \| F''(\vp_1)\|_{L^3(\Omega)}
   + \| F''(\vp_2)\|_{L^3(\Omega)} \big) \| \vp\|_{L^6(\Omega)}\\
& \leq C Y.
\end{align*}
Here we have used \eqref{g:14} in the last inequality.
We also have
\begin{align*}
\int_{\Omega}   S \,  q \, \d x \leq C \| \vp\| \|q \|\leq \frac{\varepsilon}{4} \| q\|^2+CY.
\end{align*}
By definition of $L$, and observing that $|F'''(s)|\leq C (F''(s))^2$ and $F''$ is convex, we have
\begin{align*}
 & \int_{\Omega} \partial_t L(\vp_1,\vp_2) \frac{|  \vp|^2}{2} \,\d x\\
 & \quad \leq C \int_{\Omega} \int_0^1 \big( \tau F''(\vp_1)+(1-\tau) F''(\vp_2) \big)^2 
   \big| \tau \partial_t \vp_1 +(1-\tau) \partial_t \vp_2 \big| \d \tau \, \frac{|\vp|^2}{2} \, \d x\\
 & \quad \leq C \big( \| F''(\vp_1)\|_{L^6(\Omega)}^2+ \| F''(\vp_2)\|_{L^6(\Omega)}^2\big)\big( \|\partial_t \vp_1 \|_{L^3(\Omega)} 
  +\| \partial_t \vp_2\|_{L^3(\Omega)} \big) \| \vp\|_{L^6(\Omega)}^2\\
 & \quad \leq C \big( \|\partial_t \vp_1 \|_{L^3(\Omega)} +\| \partial_t \vp_2\|_{L^3(\Omega)} \big) Y.
\end{align*}
By using \eqref{diff-p}$_3$, the boundary condition \eqref{bc} and the Sobolev embeddings, we find
\begin{align*}
\theta_0 \int_{\Omega}  \vp \partial_t  \vp \, \d x 
  &= \theta_0 \int_{\Omega} \vp (\Delta \mu +S - \div( \vp_1 \uu + \vp \uu_2)) \, \d x \\
&= \theta_0 \int_{\Omega} -\nabla \vp \cdot \nabla \mu + S \vp + \nabla \vp \cdot (\vp_1 \uu+ \vp \uu_2 ) \, \d x \\
&\leq C \| \nabla \vp\| \| \nabla \mu\| + C \| \vp\|^2 + C \|\nabla \vp \| \| \vp_1\|_{L^\infty(\Omega)} \| \uu\|
  + C \| \nabla \vp\| \| \vp \|_{L^6(\Omega)} \| \uu_2\|_{L^3(\Omega)}\\
&\leq \frac16 \| \nabla \mu\|^2 + \frac14 \| \uu\|^2 + C (1+\| \uu_2\|_{L^3(\Omega)}) Y,
\end{align*}
By using the regularity theory of the Dirichlet problem, and recalling that $\varepsilon<1$, we have
\begin{align*}
\varepsilon \int_{\Omega} S (-\Delta )^{-1} q \, \d x \leq C \|\vp \| \| q\| \leq \frac{\varepsilon}{4} \| q\|^2+ C Y,
\end{align*}
\begin{align*}
- \varepsilon \int_{\Omega} \vp_1 \nabla \mu \cdot \nabla (-\Delta)^{-1}  q \, \d x& \leq \varepsilon C \|\vp_1\|_{L^\infty(\Omega)} \| \nabla \mu \| \| q\|\leq  \frac16 \| \nabla \mu\|^2 + \varepsilon^2 C \|q \|^2 ,
\end{align*}
and
\begin{align*}
-\varepsilon \int_{\Omega}  \vp \, \nabla \mu_2 \cdot \nabla (-\Delta)^{-1}  q \, \d x \leq \varepsilon C \|\vp \|_{L^6(\Omega)} \|\nabla \mu_2 \|_{L^3(\Omega)} \| q\|\leq \frac{\varepsilon}{4} \| q \|^2 + C  \| \nabla \mu_2\|^2 Y.
\end{align*}
Collecting all the above estimates together,  we eventually arrive at the differential inequality 
\begin{align*}
\ddt Y &+ \frac12 \| \nabla \mu\|^2+ \frac12 \|  \uu\|^2+ \Big(\varepsilon- \frac{3\varepsilon}{4}- \varepsilon^2 C \Big) \| q\|^2 \\
& \leq C \Big(1+  \| \uu_2\|_{L^3(\Omega)}^2+\| \nabla \mu_2\|_{L^3(\Omega)}^2 + \| \partial_t \vp_1\|_{L^3(\Omega)}+ \| \partial_t \vp_2\|_{L^3(\Omega)}   \Big) Y.
\end{align*}
By choosing $\varepsilon$ sufficiently small, we finally end up with
\begin{align*}
\ddt Y \leq C \Big(1+  \| \uu_2\|_{L^3(\Omega)}^2+\| \nabla \mu_2\|_{L^3(\Omega)}^2 + \| \partial_t \vp_1\|_{L^3(\Omega)}+ \| \partial_t \vp_2\|_{L^3(\Omega)}   \Big) Y.
\end{align*}
Thus, an application of the Gronwall lemma entails that $\vp_1(t)=\vp_2(t)$ for all $t \in [0,T]$. In turn, this immediately  implies that $\uu_1(t)=\uu_2(t)$ and $q_1(t)=q_2(t)$ for all $t \in [0,T]$.


\section{Local Existence of  Strong Solutions in Three Dimensions}
\label{S4}
\setcounter{equation}{0}

This section is devoted to the analysis of the strong solutions to system \eqref{CHHS}-\eqref{bc} in the three dimensional setting. 

\begin{theorem}\label{teo:3d}
Let $\Omega$ be a bounded domain with smooth boundary in $\mathbb{R}^3$. Assume the conditions~{\rm (A1)--(A3)} hold. 
Then, there exists a time $T_0>0$ and at least one strong 
solution $(\uu,q,\vp)$ to system \eqref{CHHS}-\eqref{bc} such that
\begin{align}
\label{regou3}
& \uu \in L^\infty(0,T_0; L^2(\Omega))\cap L^4(0,T_0;H^1(\Omega)),\\
\label{regoq3}
& q \in L^\infty(0,T_0;H_0^1(\Omega))\cap L^4(0,T_0;H^2(\Omega)),\\
\label{regofhi3}
& \vp \in L^\infty(0,T_0;W^{2,6}(\Omega))\cap H^1(0,T_0;H^1(\Omega)), \\
\label{regomu3}
& \mu \in L^\infty(0,T_0;H^1(\Omega)) \cap L^4(0,T_0;H^2(\Omega)),\\
\label{regopsi3}
& \Psi'(\fhi) \in L^\infty(0,T_0;L^6(\Omega)).
\end{align}
Such a strong solution satisfies the system \eqref{CHHS}-\eqref{bc} 
almost everywhere in $\Omega \times (0,T_0)$ 
and assumes the initial value $\vp(\cdot,0)=\vp_0 (\cdot)$.
\end{theorem}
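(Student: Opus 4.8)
The plan is to follow the same three-step scheme employed for Theorem~\ref{teo:2d}: regularize the system (extending $S$ as in \eqref{S-ext} and approximating the convex part of $\Psi$ by smooth potentials satisfying \eqref{superquad} uniformly in the parameter), derive a priori bounds independent of the regularization, and pass to the limit by compactness. The only genuinely new point is the higher-order estimate, which in three dimensions closes only locally in time. The total-mass bound \eqref{mass2} and the basic energy estimate yielding \eqref{g:01d} are dimension-independent, as already noted in the text, so they transfer verbatim and give $\vp\in L^\infty(0,T_0;H^1(\Omega))$, $\mu\in L^2(0,T_0;H^1(\Omega))$, $\uu\in L^2(0,T_0;L^2(\Omega))$, together with $\vp\in L^4(0,T_0;H^2(\Omega))\cap L^2(0,T_0;W^{2,6}(\Omega))$ through \eqref{g:15}. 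Likewise, the algebraic reformulations \eqref{PP2}, \eqref{muP}, \eqref{phiqP} and the corresponding elliptic $H^2$-bounds \eqref{qH2}, \eqref{muH2}, \eqref{phiqH2}, as well as the differentiated identity \eqref{test3}, the functional \eqref{defiH} and its coercivity \eqref{H}, are purely structural and carry over unchanged to $d=3$.

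First I would reproduce the computation leading to \eqref{test4}, so that everything reduces to estimating $I_1+I_2+I_3+I_4$ and the term $C\|\partial_t\vp\|_\ast^2$. The decisive difference is that the Br\'ezis--Gallouet--Wainger inequality \eqref{BW2} is no longer available, so every logarithmic factor of the two-dimensional argument has to be replaced by a true power coming from the three-dimensional tools \eqref{L3}, \eqref{Ad3}, \eqref{GN3}. Concretely, $\|\nabla\vp\|_{L^\infty(\Omega)}$ is controlled by $\|\vp\|_{W^{2,6}(\Omega)}\le C(1+\|\nabla\mu\|)$ via \eqref{W2p} with $p=6$ and the embedding $W^{2,6}(\Omega)\hookrightarrow W^{1,\infty}(\Omega)$; the Agmon inequality \eqref{Ad3} combined with \eqref{H2} gives $\|\partial_t\vp\|_\ast\le C(C+H)^{5/8}$; and the critical term $I_4=-\int_\Omega \partial_t\vp\,\nabla\mu\cdot\uu\,\d x$ is handled by applying the three-dimensional Ladyzhenskaya inequality \eqref{L3} to both $\partial_t\vp$ and $\nabla\mu$, namely
\[
I_4\le C\|\partial_t\vp\|^{\frac14}\|\partial_t\vp\|_{H^1(\Omega)}^{\frac34}\|\nabla\mu\|^{\frac14}\|\mu\|_{H^2(\Omega)}^{\frac34}\|\uu\|,
\]
after which \eqref{muH2}, \eqref{phiqH2} and the three-dimensional analog of \eqref{qH2-2}, namely $\|q\|_{H^2(\Omega)}\le C(C+H)^2+C(C+H)\|\partial_t\vp\|_{H^1(\Omega)}^{1/2}$, are inserted and the surviving factors of $\|\partial_t\vp\|_{H^1(\Omega)}$ (appearing with total exponent below $2$) are absorbed by Young's inequality.

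Collecting these estimates in \eqref{test4} I expect to reach a differential inequality of the form
\[
\ddt H+\frac{\eta}{2}\|\partial_t\vp\|_{H^1(\Omega)}^2\le C\,(1+H)^{p}
\]
for some finite $p>1$ fixed by the worst exponent of the interpolation bookkeeping. Unlike in two dimensions, this superlinear inequality carries no logarithmic damping, so Lemma~\ref{GL2} no longer applies; comparison with the scalar ODE $y'=C(1+y)^p$ instead bounds $H$ only on a finite interval $[0,T_0]$, with $T_0>0$ depending on the initial value $H(0)$, which is finite by (A3) and Remark~\ref{rem:H}. This is exactly the mechanism forcing the three-dimensional result to be local in time. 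On $[0,T_0]$ one then recovers $\nabla\mu,\uu\in L^\infty(0,T_0;L^2(\Omega))$ and $\partial_t\vp\in L^2(0,T_0;H^1(\Omega))$, and through \eqref{W2p}, \eqref{muH2} and the analog of \eqref{qH2-2} the remaining regularity \eqref{regou3}--\eqref{regomu3}, while the logarithmic nature of $\Psi$ forces $|\vp|<1$ a.e. and $\Psi'(\vp)\in L^\infty(0,T_0;L^6(\Omega))$ in the limit, exactly as in \cite{FLRS2018}.

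The final passage to the limit is then standard: the uniform bounds feed the Aubin--Lions--Simon compactness lemma, and the limit of the logarithmic nonlinearity is identified by monotonicity as in \cite{GGW2018,G2020}. The hard part will be the higher-order estimate itself, i.e. tracking every power of $H$ through the coupled chain of $H^2$-bounds for $q$, $\mu$ and $\tfrac{\vp}{1+\vp^2}q$ and checking that, with only the weaker three-dimensional embeddings at hand, each factor $\|\partial_t\vp\|_{H^1(\Omega)}$ enters with exponent strictly less than $2$, so that absorption leaves a closed superlinear inequality in $H$ alone. I would not attempt a uniqueness statement in this setting, since the difference estimate used in dimension two relies on $L^3$-type control of $\uu_2$, $\nabla\mu_2$ and $\partial_t\vp_i$ that is not guaranteed by the local three-dimensional regularity obtained here.
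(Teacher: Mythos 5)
Your proposal follows essentially the same route as the paper's proof: dimension-independent energy estimates, the elliptic reformulations \eqref{PP2}, \eqref{muP}, \eqref{phiqP} feeding the $H^2$-bounds for $q$, $\mu$ and $\tfrac{\vp}{1+\vp^2}q$, the differentiated identity for the functional $H$, and closure via a superlinear ODE inequality $\ddt H\le C(C+H)^p$ whose comparison solution blows up in finite time, giving only local existence. The minor differences (you use the embedding $W^{2,6}\hookrightarrow W^{1,\infty}$ and an $L^4$--$L^4$--$L^2$ H\"older splitting of $I_4$ where the paper uses the Gagliardo--Nirenberg interpolation \eqref{g:GN} and an $L^6$--$L^3$--$L^2$ splitting) only change the final exponent $p$, and your own consistency check --- that $\|\partial_t\vp\|_{H^1(\Omega)}$ accumulates total exponent strictly below $2$ through the chain of bounds --- does go through, so the argument closes as claimed.
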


\begin{remark}
The uniqueness of the strong solutions obtained in Theorem \ref{teo:3d} remains an open issue. The argument used in the two dimensional case cannot be applied due 
to the lack of regularity for the derivatives of the potential (i.e. $\Psi''(\fhi)$ and $\Psi'''(\fhi)$) in three dimensions. On the other hand, the control 
of the difference of two solutions in weaker norms as in \cite{G2020,GGW2018} does not seem to be possible here due to the  boundary conditions \eqref{bc}. 
\end{remark}

\begin{proof}[Proof of Theorem \ref{teo:3d}]
We first observe that the basic {\it a priori} estimates performed 
in the Subsection ~\ref{subsec:en}, i.e. the Total Mass Dynamics and the Energy Estimates, are also valid in the three dimensional case with no variation in the proof. 
As a consequence, we still achieve \eqref{EE},
with no restriction on the final time $T$. Similarly, we report that 
\begin{equation}
\label{S-H1-3}
\|S \|_{L^\infty(0,T;H^1(\Omega))}\leq C,
\end{equation}
and 
\begin{equation}
\label{muH1-3}
\| \mu\|_{H^1(\Omega)}\leq C(1+\| \nabla \mu\|).
\end{equation}
Exploiting once again \cite[Theorem 5.1]{G2020} (cf. \cite[Lemmas 7.3 and 7.4]{GGW2018}), we have 
\begin{equation}
\label{H2-3}
\| \vp\|_{H^2(\Omega)}^2 \leq C(1+ \| \nabla \mu\|), \quad 
\| \vp\|_{W^{2,6}(\Omega)}\leq C(1+\| \nabla \mu\|).
\end{equation}
We now proceed with the higher order estimates. We notice that the validity of the relation \eqref{test3} is independent of the dimension.  In particular, for the reader's convenience we report that 
\begin{align}
 \ddt \bigg[ \frac12 \| \nabla \mu\|^2 &+ \frac12 \| \uu\|^2 - (S,\mu)- (S,q) \bigg] +
 \eta \|\partial_t \vp\|_{H^1(\Omega)}^2 + \int_{\Omega} F''(\vp) |\partial_t \vp |^2 \, \d x \notag \\
 & \leq C \| \partial_t \fhi\|_{\ast}^2+ \theta_0 \int_{\Omega} |\partial_t \vp|^2 \, \d x- (\partial_t S, \mu)- (\partial_t S, q)-
\int_{\Omega} \partial_t \vp \nabla \mu \cdot \uu \, \d x \nonumber \\
 & \leq C \| \partial_t \fhi\|^2 - (\partial_t S, \mu) - (\partial_t S, q)
 - \int_{\Omega} \partial_t \vp \nabla \mu \cdot \uu \, \d x \nonumber  \\
 & =: I_1 + I_2 + I_3 + I_4,
 \label{g:21}
\end{align}
for a positive constant $\eta$.
By the elliptic regularity of the system \eqref{Smu2}, combined with Sobolev's embeddings and the estimates \eqref{EE}, it follows that
\begin{equation}
\label{g:25}
  \| q \| 
   \le C \| S \| + C \| \fhi \nabla \mu \|_{L^{\frac65}(\Omega)}
   \le C( 1+  \| \fhi \|_{H^1(\Omega)} \| \nabla \mu \| )
   \le C ( 1 + \| \nabla \mu \| ).
\end{equation}
Then, we easily infer from \eqref{S-H1-3} and \eqref{g:25} that the estimates \eqref{Smu} and \eqref{Sq} remain true in the three dimensional setting. Hence, as before, we can define the functional $H$ as in \eqref{defiH} and notice that relation \eqref{H} still holds.
Moreover, thanks to \eqref{Ad3} and \eqref{H2-3}, we deduce that
\begin{align}
\nonumber
  \| \nabla q \| 
   & \le C \| S \| + C \| \fhi \nabla \mu \|
   \le C + C \| \fhi \|_{L^\infty(\Omega)} \| \nabla \mu \|\\
 \label{g:25b}
  & \le C + C \| \fhi \|_{H^1(\Omega)}^{\frac12} \| \fhi \|^{\frac12}_{H^2(\Omega)} \| \nabla \mu \|
   \le C (1 + \| \nabla \mu \|^{\frac54} ).
\end{align}
Now, we can control the first three terms on the right-hand side
of \eqref{g:21} as in Section \ref{S3}. Indeed, by \eqref{muH1-3}, \eqref{g:25} and the form of $S$, we notice that
\begin{equation}
 \label{g:24}
 I_2  = - (\partial_t S, \mu)  \le C \| \dt\fhi \|^2 + C (1 + \|\nabla \mu \|)^2,
\end{equation}
and
$$
  I_3 = - (\partial_t S, q) \le C \| \dt\fhi \|^2 + C (1 + \|\nabla \mu \|)^2.
$$
Collecting the above computations, we readily arrive at
\begin{equation}
\label{g:26}
  I_1 + I_2 + I_3 \le C \| \dt \fhi \|^2 + C (1 + \|\nabla \mu \|)^2 
 \le C \| \dt \fhi \|^2 + C (1 + H).
\end{equation}
Now, using \eqref{CHHS}$_3$, \eqref{Ad3} and \eqref{H2-3}, we have
\begin{align}
\nonumber
  \| \dt \fhi \|_{\ast} &
  \le  \| \nabla \mu \| + \| \uu\| \| \fhi \|_{L^\infty(\Omega)}
   +C \| S \|\\
 \nonumber
   & \le \| \nabla \mu \| + C \| \uu \| \| \fhi \|_{H^1(\Omega)}^{\frac12} \| \fhi \|_{H^2(\Omega)}^{\frac12} + C\\
 \label{g:27}
  & \le C(1+\| \nabla \mu \|)+ C \| \uu \| ( 1 + \| \nabla \mu \|)^\frac14
  \le C (C+ H)^{\frac58}.
\end{align}
Consequently, we find
\begin{align}
  \| \dt \fhi \|
   & \le  C \| \dt \fhi \|_{\ast}^{\frac12} \| \dt \fhi \|_{H^1(\Omega)}^{\frac12} 
     \le C (C + H)^{\frac{5}{16}} \| \dt \fhi \|_{H^1(\Omega)}^{\frac12}.
 \label{g:27b}
\end{align}
Replacing the above into \eqref{g:26}, we obtain
\begin{equation}
\label{g:26b}
  I_1 + I_2 + I_3 \le \frac{\eta}{4} \|  \dt \fhi \|_{H^1(\Omega)}^2 
  + C ( 1 + H )^{\frac54}.
\end{equation}
Next, we recall the Gagliardo-Nirenberg inequality (cf. \eqref{GN3} with $q=6$)
\begin{equation}
\label{g:GN}
  \| f\|_{L^\infty(\Omega)}
   \le C \| f \|^{\frac14} \| f \|_{W^{1,6}(\Omega)}^{\frac34},  \quad \forall\, f \in W^{1,6}(\Omega).
\end{equation}
Proceeding as in \eqref{g:12} and using \eqref{muH1-3}, 
\eqref{g:27b}, and \eqref{g:GN}, we deduce that
\begin{align}
 \nonumber
  \| \mu \|_{H^2(\Omega)} 
  & \le C | \ov{\mu} | + C \| \Delta \mu \| +
   C \Big\| \frac{\vp}{1+\vp^2} q \Big\|_{H^2(\Omega)}\\
 \nonumber
  & \le C ( 1 + \| \nabla \mu \| )
   + C \big( \| \dt\fhi \| + \| S (\fhi - 1) \| + \| \uu \cdot \nabla \fhi \| \big)
   + C \Big\| \frac{\vp}{1+\vp^2} q \Big\|_{H^2(\Omega)}\\
 \nonumber
  & \le C ( C + H )^{\frac12}
   + C \big( 1+ \| \dt\fhi \|+ \| \uu \| \| \nabla \fhi \|_{L^\infty(\Omega)} \big)
   + C \Big\| \frac{\vp}{1+\vp^2} q \Big\|_{H^2(\Omega)}\\
 \nonumber
  & \le C ( C + H )^{\frac12}
   + C  ( C + H )^\frac{5}{16}\| \dt \fhi \|_{H^1(\Omega)}^\frac12 
   + \| \uu \| \| \nabla \fhi \|^{\frac14} \| \fhi \|_{W^{2,6}(\Omega)}^{\frac34} 
   + C \Big\| \frac{\vp}{1+\vp^2} q \Big\|_{H^2(\Omega)}\\
 \nonumber
  & \le C ( C + H )^{\frac12}
   + C ( C + H )^\frac{5}{16}\| \dt \fhi \|_{H^1(\Omega)}^\frac12  
 + \| \uu \| ( 1 + \| \nabla \mu \|^{3/4} ) 
   + C \Big\| \frac{\vp}{1+\vp^2} q \Big\|_{H^2(\Omega)}\\
 \label{g:41}
  & \le C ( C + H )^{\frac78}
   + C ( C + H )^\frac{5}{16}\| \dt \fhi \|_{H^1(\Omega)}^\frac12   + C \Big\| \frac{\vp}{1+\vp^2} q \Big\|_{H^2(\Omega)}.
\end{align}
In order to provide a control of the last term in \eqref{g:41}, we report the analogue of \eqref{gfhi} in three dimensions
\begin{align}
\Big\| \frac{\vp}{1+\vp^2}\Big\|_{H^2(\Omega)}
\leq C \big( 1+ \| \nabla \vp\|_{L^4(\Omega)}^2+ \|\vp\|_{H^2(\Omega)} \big)
\leq C\big(1+ \| \vp\|_{H^2(\Omega)}^\frac32\big).
\label{gfhi-3}
\end{align}
Here we have used \eqref{L3}.
Next, recalling \eqref{ell:q} and \eqref{ell:q2} and using \eqref{gfhi-3}, we find 
\begin{align}
\nonumber
\Big\|\frac{\vp}{1+\vp^2} q\Big\|_{H^2(\Omega)}
&\leq C+ C \| \vp\|_{L^\infty(\Omega)} \| \nabla \vp\|_{L^\infty(\Omega)} \|\nabla \mu \|
+ C \| \nabla \vp\|_{L^\infty(\Omega)} \| \nabla q\|+
C \| \partial_t \vp\| \\
&\quad + C(1+\|\vp \|_{H^2(\Omega)}^\frac32) \| q\|_{L^\infty(\Omega)}.
\label{phiqH2-3}
\end{align}
To control the $L^\infty$-norm of $q$, we report \eqref{qH2} for the reader's convenience
\begin{align}
\| q\|_{H^2(\Omega)}
&\leq C+ C \| \vp\|_{L^\infty(\Omega)} \| \nabla \vp\|_{L^\infty(\Omega)} \|\nabla q \|+ C (1+\| \vp\|_{L^\infty(\Omega)}^2)\| \nabla \vp\|_{L^\infty(\Omega)} \| \nabla \mu\| \notag \\
&\quad+ C \| \vp\|_{L^\infty(\Omega)} \|\partial_t \vp \|.
\label{qH2-3}
\end{align}
Exploiting \eqref{Ad3}, \eqref{EE}, \eqref{H2-3}, \eqref{g:25b}, \eqref{g:27b} and \eqref{g:GN}, we obtain
\begin{align}
 \nonumber
 \| q \|_{H^2(\Omega)} 
 & \le C+ C \| \fhi \|_{H^1(\Omega)}^{\frac12} \| \fhi \|_{H^2(\Omega)}^{\frac12}  \| \nabla \fhi \|^{\frac14} \| \fhi \|_{W^{2,6}(\Omega)}^{\frac34}  \| \nabla q \| \\ 
 \nonumber
 & \quad
   + C \big(1 + \| \fhi \|_{H^1(\Omega)} \| \fhi \|_{H^2(\Omega)} \big)   
     \| \nabla \fhi \|^{\frac14} \| \fhi \|_{W^{2,6}(\Omega)}^{\frac34}  \| \nabla \mu \| 
    + C \| \fhi \|_{H^1(\Omega)}^{\frac12} \| \fhi \|_{H^2(\Omega)}^{\frac12} \| \partial_t \vp \|   \\
 \nonumber
 & \le C+ C \| \fhi \|_{H^2(\Omega)}^{\frac12} \| \fhi \|_{W^{2,6}(\Omega)}^{\frac34} \| \nabla q \|   + C \big(1 + \| \fhi \|_{H^2(\Omega)} \big)   
    \|  \fhi \|_{W^{2,6}(\Omega)}^{\frac34} \| \nabla \mu \| 
    + C \| \fhi \|_{H^2(\Omega)}^{\frac12} \| \partial_t \vp \| \\
 \nonumber
 & \le C (1 + \| \nabla \mu \| )^\frac14      
    ( 1 + \| \nabla \mu  \| )^\frac34 (1 + \| \nabla \mu \| )^{\frac54}
   + C (1 + \| \nabla \mu \| )^\frac12   
     (1 + \| \nabla \mu \|)^{\frac34}  \| \nabla \mu \| \\
  \nonumber
 & \quad\quad\quad
    + C ( 1 + \| \nabla \mu \| )^\frac14 (C + H)^{\frac{5}{16}} \|  \dt \fhi \|_{H^1(\Omega)}^\frac12  \\
 \label{g:42}
  & \le C (C+H)^{\frac98} + C  (C+H)^{\frac{7}{16}} \| \dt \fhi \|_{H^1(\Omega)}^\frac12.
\end{align}
Now, we go back to \eqref{phiqH2-3}. By similar computations, and using \eqref{g:42}, we have
\begin{align}
\nonumber
\Big\|\frac{\vp}{1+\vp^2} q\Big\|_{H^2(\Omega)}
&\leq C+ C \| \vp\|_{H^1(\Omega)}^\frac12 \| \vp\|_{H^2(\Omega)}^\frac12 
\| \nabla \vp\|^\frac14 \| \vp\|_{W^{2,6}(\Omega)}^\frac34 \|\nabla \mu \|
+ C \| \nabla \vp\|^\frac14 \| \vp\|_{W^{2,6}(\Omega)}^\frac34 
\| \nabla q\| \notag \\
&\quad + C\| \partial_t \vp\| + C(1+\|\vp \|_{H^2(\Omega)}^\frac32) \| q\|_{H^1(\Omega)}^\frac12 \| q\|_{H^2(\Omega)}^\frac12\notag \\
&\leq C + C  \| \vp\|_{H^2(\Omega)}^\frac12 \| \vp\|_{W^{2,6}(\Omega)}^\frac34 \|\nabla \mu \|+ C \| \vp\|_{W^{2,6}(\Omega)}^\frac34 
\| \nabla q\| \notag \\
&\quad + C(1+\|\vp \|_{H^2(\Omega)}^\frac32) \| q\|_{H^1(\Omega)}^\frac12 \| q\|_{H^2(\Omega)}^\frac12
 + C \| \partial_t \vp\| \notag \\
&\leq C(1+\| \nabla \mu\|)^\frac14 (1+\| \nabla \mu\|)^\frac34(1+\|\nabla \mu \| )+ C (1+\| \nabla \mu\|)^\frac34 (1+\| \nabla \mu\|)^\frac54 \notag \\
&\quad +C (1+\| \nabla \mu\|)^\frac34 (1+\| \nabla \mu\|)^\frac54 \big( (C+H)^{\frac{9}{16}} + (C+H)^{\frac{7}{32}}  \|  \dt \fhi \|_{H^1(\Omega)}^\frac14 \big)\notag \\
& \quad +C (C + H)^{\frac{5}{16}} \| \dt \fhi \|_{H^1(\Omega)}^{\frac12} \notag \\
&\leq C(C+H)+ C(C+H)^\frac{25}{16}
+C(C+H)^\frac{39}{32}\| \partial_t \fhi\|_{H^1(\Omega)}^\frac14
+C(C+H)^\frac{5}{16} \| \partial_t \fhi \|_{H^1(\Omega)}^\frac12 \notag \\
&\leq C(C+H)^\frac{25}{16}+
C(C+H)^\frac{39}{32}\| \partial_t \fhi\|_{H^1(\Omega)}^\frac14
+C(C+H)^\frac{5}{16} \| \partial_t \fhi \|_{H^1(\Omega)}^\frac12.
\label{phiqH2-3+}
\end{align}
Replacing \eqref{phiqH2-3+} into \eqref{g:41} we finally have
\begin{equation}
\label{g:44}
  \| \mu \|_{H^2(\Omega)} 
   \le  C(C+H)^\frac{25}{16}+ C ( C + H )^\frac{5}{16}\| \dt \fhi \|_{H^1(\Omega)}^\frac12
+C(C+H)^\frac{39}{32}\| \partial_t \fhi\|_{H^1(\Omega)}^\frac14.
\end{equation}
We are now ready to provide a bound for the term $I_4$. Using the 
above relations, we deduce that
\begin{align}
  I_4 & = - \int_{\Omega} \partial_t \vp \nabla \mu \cdot \uu \, \d x  
  \nonumber \\
   & \le \| \dt\fhi \|_{L^6(\Omega)} \| \uu \| \| \nabla \mu \|_{L^3(\Omega)}
  \nonumber \\
   &\le C \| \dt\fhi \|_{H^1(\Omega)} \| \uu \| \| \nabla \mu \|^{\frac12} \| \mu \|^{\frac12}_{H^2(\Omega)}\nonumber \\
 \nonumber
   & \le C \| \dt\fhi \|_{H^1(\Omega)} ( C + H )^{\frac34} \| \mu \|_{H^2(\Omega)} ^{\frac12}\\
 \nonumber
   & \le C \| \dt\fhi \|_{H^1(\Omega)} ( C+ H )^{\frac34} 
      \Big[ C(C+H)^\frac{25}{32}+ C ( C + H )^\frac{5}{32}\| \dt \fhi \|_{H^1(\Omega)}^\frac14
+C(C+H)^\frac{39}{64}\| \partial_t \fhi\|_{H^1(\Omega)}^\frac18 \Big] \nonumber \\
 \nonumber
   & \le  C \| \dt\fhi \|_{H^1(\Omega)} (C+H)^{\frac{49}{32}} 
    + C \|  \dt \fhi \|_{H^1(\Omega)}^{\frac54} (C+H)^{\frac{29}{32}}
    + C \|  \dt \fhi \|_{H^1(\Omega)}^{\frac98} (C+H)^{\frac{87}{64}}\\
 \label{g:31}
   & \le \frac{\eta}{4} \| \dt\fhi \|_{H^1(\Omega)}^2
    + C (C+H)^{\frac{87}{28}}. 
\end{align}
Replacing \eqref{g:26b} and \eqref{g:31} into \eqref{g:21}, we arrive at
\begin{equation}
 \ddt H  + \frac{\eta}{2} \| \partial_t \vp\|_{H^1(\Omega)}^2
  \le C (C+H)^{\frac{87}{28}}.
 \label{g:21x}
\end{equation}
Thus, by comparison principle for ODE's, we obtain that there exists
a time $T_0>0$ depending in particular on the value of $H$ at time $t=0$
such that 
\begin{equation}
  \| \nabla \mu \|_{L^\infty(0,T_0;L^2(\Omega))} + \| \uu\|_{L^\infty(0,T_0;L^2(\Omega))} + \| \vp \|_{H^1(0,T_0;H^1(\Omega))} \le C.
 \label{st:g1}
\end{equation}
Thanks to \eqref{muH1-3}, \eqref{H2-3}, \eqref{g:25} and \eqref{g:25b}, this immediately implies that
$$
 \|  \mu \|_{L^\infty(0,T_0;H^1(\Omega))} 
 + \| \vp\|_{L^\infty(0,T_0;W^{2,6}(\Omega))}
+\| q\|_{L^\infty(0,T_0;H_0^1(\Omega))}\leq C.
$$
By comparison in \eqref{CHHS}$_4$, it easily follows \eqref{regopsi3}.
To get further regularity of $q$ and $\mu$, we then notice that, by \eqref{regofhi3} and Sobolev's embeddings, both $\fhi$ and $\nabla \fhi$ are uniformly bounded. 
Hence, recalling \eqref{g:42} and \eqref{g:44}, we also infer that 
$$
\| q\|_{L^4(0,T_0;H^2(\Omega))}\leq C,\quad 
\| \mu\|_{L^4(0,T_0;H^2(\Omega))}\leq C.
$$ 
Finally, observing that \eqref{uH1} holds in three dimensions, the above regularities entail the second of \eqref{regou3}.
\end{proof}


\section*{Acknowledgments}
\noindent
This research was supported by the Italian Ministry of Education, University and Research
(MIUR): Dipartimenti di Eccellenza Program (2018-2022),
Department of Mathematics ``F.~Casorati'', University of Pavia.
In addition, this research has been performed in the framework of the by GNAMPA-INdAM Project 
``Analisi matematica di modelli a interfaccia diffusa per fluidi complessi" and of the project Fondazione Cariplo-Regione Lombardia  MEGAsTAR
``Matema\-tica d'Eccellenza in biologia ed ingegneria come acceleratore
di una nuova strateGia per l'ATtRattivit\`a dell'ateneo pavese''. The present paper
also benefits from the support of the MIUR-PRIN Grant 2015PA5MP7 ``Calculus of Variations'' for GS, of the GNAMPA (Gruppo Nazionale per l'Analisi Matematica, la Probabilit\`a e le loro Applicazioni)
of INdAM (Istituto Nazionale di Alta Matematica) for AG, ER and~GS, and of a grant from the Research Grants Council of the Hong Kong Special Administrative Region (Project No.: HKBU 14302319) for KFL.


\end{document}